\RequirePackage{xcolor}
\documentclass[journal,twoside,web]{ieeecolor} 
\usepackage{lcsys}
\usepackage{dsfont} 
\usepackage[T1]{fontenc}
\usepackage[utf8]{inputenc}
\usepackage{graphicx}
\usepackage{amsmath,mathtools,booktabs}
\usepackage{amssymb}
\usepackage{xcolor,url}
\usepackage{algorithm,algorithmic}
\usepackage{cite}

\usepackage[caption=false,labelfont={footnotesize,bf,rm},textfont={footnotesize,rm},format=hang]{subfig}

\newtheorem{assumption}{Assumption}
\newtheorem{proposition}{Proposition}
\newtheorem{theorem}{Theorem}

\newtheorem{corollary}{Corollary}

\newtheorem{remark}{Remark}
\graphicspath{{./image/}}

\def\N {{\mathbb N}}

\begin{document}

\title{\LARGE \bf Sakawa-Shindo algorithm for optimal control of time-delay systems, with applications to epidemiology}

\author{Rami Katz$^{a}$, Francesca Cal\`a Campana$^{b}$ and Giulia Giordano$^{b}$
\thanks{R.K. is supported by the Alon Fellowship from the Council of Higher Education of Israel. 
G.G. and F.C.C. are supported by the European Union through the ERC INSPIRE grant (project n. 101076926).}% <-this % stops a space
\thanks{$^{a}$ School of Electrical and Computer Engineering, Tel Aviv University, Israel. E-mail: {\tt\small ramkatsee@tauex.tau.ac.il}.}
\thanks{$^{b}$ Department of Industrial Engineering, University of Trento, Italy. E-mails: {\tt\small \{f.calacampana,giulia.giordano\}@unitn.it}.}
}

%; views and opinions expressed are however those of the authors only and do not necessarily reflect those of the EU, the European Research Executive Agency or the European Research Council; neither the EU nor the granting authority can be held responsible for them

\maketitle
\thispagestyle{empty} % Removes the page number in the first page

\begin{abstract}
We extend the Sakawa-Shindo algorithm to solve optimal control problems where the system dynamics involve an arbitrary number of discrete state delays. We prove that the algorithm guarantees termination in a finite number of steps, asymptotic first-order optimality of the generated control sequence and convergence of a subsequence to a control satisfying first-order optimality, and we apply it to the optimal design of non-pharmaceutical interventions and vaccination plans for epidemic models with delays associated with incubation period and vaccination.
\end{abstract}
\begin{IEEEkeywords}
Time-delay systems, Epidemic models, Optimal control.
\end{IEEEkeywords}

\section{Introduction}\label{sec:intro}
\IEEEPARstart{S}{olving} optimal control problems (OCPs) for delay differential equation (DDE) systems poses significant computational challenges, although the Pontryagin Minimum Principle (PMP) admits an extension to DDEs \cite{kolmanovskiui1996control,boccia2017maximum}. Numerical approaches developed to this aim include both direct methods, which reformulate the OCP as a nonlinear program %and optimise simultaneously over discretised state and control trajectories
\cite{ritschel2024}, and indirect methods, which discretise the PMP necessary conditions for optimality \cite{gollmann2008}.

Here, we consider OCPs governed by DDEs with an arbitrary number of discrete state delays (Section~\ref{sec:model}) and we propose the ESSA algorithm (Section~\ref{sec:ESSA}) that extends to a time-delay setting the Sakawa–Shindo (S\&S) algorithm \cite{Sakawa1980}, a PMP-based sequential numerical method originally developed for delay-free OCPs. In Section~\ref{sec:theoreticalresults}, we show well-posedness of the problem and prove that the algorithm terminates in a finite number of steps, achieves asymptotic first-order optimality of the generated sequence of controls, and guarantees converge of a subsequence to a control that satisfies first-order optimality.
In Section~\ref{sec:Num_exp}, we demonstrate the effectiveness of the ESSA algorithm to solve epidemiological control problems governed by DDE models. 
Epidemic models \cite{SIR,Brauer,Giordano2020,Giordano2021,HernandezVargas22} are precious to support the analysis and the control of epidemics, and optimal control is powerful to design interventions aimed at curbing the contagion \cite{Lenhart,Sharomi,Buonomo2019,CalaCampana2024}. Accounting for delays in epidemic dynamics is crucial \cite{Rihan2026} also for the solution of optimal control problems \cite{BASHIER2017,dong2023timedelay}. We consider the optimal design of non-pharmaceutical interventions and vaccination plans in the presence of time delays associated with the incubation period and the build-up of vaccine-induced protection.

\textbf{Notation.} 
For $k \in \mathbb{N}$, $[k]$ denotes the set $\{1, \dots, k\}$. Also, $\N_0 = \mathbb{N}\cup \{0 \}$.
We denote by $\|\cdot \|$ the Euclidean norm, as well as the corresponding induced operator norm.
$\mathcal{C}([a,b])$ is the space of continuous functions $f \colon [a,b] \to \mathbb{R}^n$ defined on $[a,b]$.
$L^p ([a,b],\mathbb{R}^n)$, $p\geq 1$, is the  space of Lebesgue measurable and $p$-integrable functions on $[a,b]$. $L^p ([a,b],\mathbb{R}_{\geq 0}^n)$ consists of $p$-integrable functions with $f(x)\geq 0$ a.e. component-wise. We denote by $\mathds{1}_A$ the indicator function of a measurable set $A$ and by $\chi(A)$ its Lebesgue measure. We use $\psi(\cdot,\mathrm{X}_h,u)\vert_t$ as an abbreviation for $\psi(t,\mathrm{X}_{h}(t),u(t))$.

\section{Optimal Control with Delays} \label{sec:model}

Let the initial time $t_0\in \mathbb{R}$ and time horizon $T\in \mathbb{R}$ satisfy $0 \leq t_0<T$ and the time delays $h_i \in \mathbb{R}$, $i\in [k]$, satisfy $0=:h_0<h_1<\dots<h_k=:h$.
We consider the system 
\begin{equation}\label{eq:Dynamics}
\begin{array}{lll}
\hspace{-8mm} &\dot{x}(t) = f(t,x(t-h_0),x(t-h_1),\dots ,x(t-h_k),u(t) ),\\
\hspace{-8mm} &x(t_0+\theta) = \phi(\theta),\ \theta \in [-h,0],
\end{array}
\end{equation}
where $t\in [t_0,T]$, the initial condition is $\phi\in \mathcal{C}([-h,0])$, the state is $x(t)\in \mathbb{R}^n$ and the control is $u(t)\in U \subseteq \mathbb{R}^m$; $U$ is a compact and convex set, the control input $u$ is a measurable function on $[t_0,T]$, and we denote the set of control functions $u \colon [t_0,T] \to U$ by $\mathcal{U}$.
We employ the abbreviations
\begin{equation*}
\begin{array}{lll}
\hspace{-5mm} &\mathrm{X} = (x_0,x_1,\dots,x_k )\in \mathbb{R}^{(k+1)n}, \text{  for  } \{x_i \}_{i=0}^k\subseteq \mathbb{R}^n,\\
\hspace{-5mm} &\mathrm{X}_{h}(t) = (x(t-h_0),x(t-h_1),\dots, x(t-h_k) )\in \mathbb{R}^{(k+1)n},
\end{array}
\end{equation*}
so that function $f \colon \mathbb{R}^{p}\to \mathbb{R}^n$, where $p=(k+1)n+m+1$, has arguments $(t,\mathrm{X},u)$ and the right-hand side of the time-delay system (TDS) in \eqref{eq:Dynamics} is $f(t,\mathrm{X}_{h}(t),u(t) )$.

Given $\ell \colon \mathbb{R}^{p}\to \mathbb{R}$, let the cost functional $J \colon \mathcal{U}\to \mathbb{R}$ be
\begin{equation}\label{eq:PerformanceInd}
J(u) = \int_{t_0}^{T} \ell (t,\mathrm{X}_{h}(t),u(t) )\mathrm{d}t.
\end{equation}

The functions $f$ and $\ell$ satisfy the following assumptions.

\begin{assumption}\label{assum:fandell}
Consider $\psi \in \{f, \ell \}$. For all $t$, $\psi(t,\cdot,\cdot)$ is twice continuously differentiable in $(\mathrm{X},u)$. Moreover, for all $(\mathrm{X},u)$, $\psi (\cdot, \mathrm{X},u)$ is measurable in $t$. Also, there exists $\mu\in L^2([t_0,T],\mathbb{R}_{\geq 0} )$ such that, for all $v\in U$ and all $\mathrm{X}\in \mathbb{R}^{(k+1)n}$, 
\begin{equation}\label{eq:CarCond}
\begin{array}{lll}
&\hspace{-5mm}\|\partial_{y}^{\alpha_y} \partial_{z}^{\alpha_z} \psi(t,\mathrm{X},v) \|\leq \mu(t)( \| \mathrm{X}\|+1),
\end{array}
\end{equation}
where $\alpha_y,\alpha_z \in \{0,1\}$ and $y,z \in \{x_0,x_1,\dots,x_k,u\}$.
\end{assumption}
%where $D^{\alpha}$ is \textit{any} differential operator in the variables $(\mathrm{X},u )$ of order $|\alpha|\leq 2$. {\color{red}Reformulate more specifically? I.e., we have 2 derivatives on any combination of $\mathrm{X},u$?}
% As shown below, our assumptions on $f$ and $\ell$ are sufficient to obtain well-posedness of \eqref{eq:Dynamics}. 
\begin{remark}\label{rem:terminal}
We do not explicitly include a terminal cost component just for simplicity of presentation; however, if $J(u)$ in \eqref{eq:PerformanceInd} is augmented with a sufficiently regular (e.g. Lipschitz) terminal cost $\gamma(x(T))$, by employing
\begin{equation*}
\begin{array}{lll}
&\hspace{-5mm}\gamma(x(T))-\gamma(x(t_0)) = \int_{t_0}^T \frac{\partial \gamma}{\partial x}(x(t) )f(t,\mathrm{X}_{h}(t),u(t))\mathrm{d}t,
\end{array}
\end{equation*}
we can present $J(u)$ as in \eqref{eq:PerformanceInd} with $\ell(\cdot,\mathrm{X}_{h},u)\vert_t$ replaced by $\ell_1(\cdot,\mathrm{X}_{h},u)\vert_t = \ell(\cdot,\mathrm{X}_{h},u)\vert_t+\frac{\partial \gamma}{\partial x}(x(t) )f(\cdot,\mathrm{X}_{h},u)\vert_t$.
\end{remark}

We consider the optimal control problem of finding
%$u^*\in \mathcal{U}$ such that $u^* = \operatorname{argmin}_{u\in \mathcal{U}}J(u)$ subject to \eqref{eq:Dynamics},
\begin{equation}\label{eq:optcont}
\begin{array}{ll}
u^* =& \operatorname{argmin}_{u\in \mathcal{U}}J(u) \quad \textrm{subject to \eqref{eq:Dynamics}}
\end{array}
\end{equation}
for $J(u)$ in \eqref{eq:PerformanceInd} via sequential numerical methods, and we show that the Sakawa-Shindo (S\&S) algorithm 
\cite{Sakawa1980}, which identifies extremal points of $J(u)$, can be extended to our TDS setting.

\section{Extended Sakawa-Shindo Algorithm (ESSA)}\label{sec:ESSA}
Denoting by $\lambda \in \mathbb{R}^n$ the co-state of the OCP \eqref{eq:optcont}, we introduce the Hamiltonian
% \footnote{An inspection of 
% \cite[Chapter 6]{kolmanovskii2012applied} shows that the Hamiltonian presented therein contains a typo. The correct Hamiltonian is given in \eqref{eq:HamDef}}
\begin{equation}\label{eq:HamDef}
\begin{array}{lll}
&\hspace{-5mm}H(t,\mathrm{X},u,\lambda) = (\ell(t,\mathrm{X},u)+\lambda^{\top}f(t,\mathrm{X},u ) ) \mathds{1}_{(-\infty,T]}(t).
% ,\ t\in [t_0,T],\\
% &\hspace{-8mm} H(t,\mathrm{X},u,\lambda)\equiv 0, \quad t>T,\ \forall(\mathrm{X},u,\lambda ).
\end{array}
\end{equation}
\begin{proposition}\label{Prop:HuCaratheodory}
$H(t,\cdot,\cdot,\cdot )$ is twice continuously differentiable in $(\mathrm{X},u,\lambda )$, for all $t$. $H(\cdot, \mathrm{X},u,\lambda )$ is measurable in $t$, for all $(\mathrm{X},u,\lambda )$. For all $v\in U$, $X\in \mathbb{R}^{(k+1)n}$ and $\lambda\in \mathbb{R}^n$,
\begin{equation}\label{eq:DHBound}
\begin{array}{lll}
\|\partial_y^{\alpha_y}\partial_z^{\alpha_z}H(t,\mathrm{X},v,\lambda ) \|\leq \mu(t)(1+\|\mathrm{X} \| )(1+\|\lambda \| )
\end{array}
\end{equation}
where $\alpha_y,\alpha_z\in \{0,1 \}$ and $y,z\in \{x_0,\dots,x_k,u,\lambda \}$. In particular, for $i\in \{0 \} \cup [k] $ and fixed $(\mathrm{X},u,\lambda )$, $H_{x_i}(\cdot,\mathrm{X},u,\lambda ) = \ell_{x_i}(\cdot,\mathrm{X},u)+\lambda^{\top}f_{x_i}(\cdot,\mathrm{X},u )$ is measurable.
% For all $i=0,\dots,k$, $H_{x_i}(t,\mathrm{X},u,\lambda )$ is well-defined. Moreover, (i) for all $t$, $H_{x_i}(t,\cdot,\cdot,\cdot)$ is continuously differentiable in $(\mathrm{X},u,\lambda)$; (ii) for all $(\mathrm{X},u,\lambda )$, $H_{x_i}(\cdot,\mathrm{X},u,\lambda )$ is measurable in $t$; and (iii) for all $v\in U$,
% where $D^{\alpha}$ is any differential operator in the variables $(\mathrm{X},u,\lambda )$ of order $|\alpha |\leq 1$.
\end{proposition}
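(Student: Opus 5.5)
The plan is to read off each property of $H$ directly from its definition \eqref{eq:HamDef}: it is a sum and product of $\ell$, $f$ and the affine map $\lambda\mapsto\lambda$, multiplied by the indicator $\mathds{1}_{(-\infty,T]}(t)$, which depends on $t$ only. I would treat the cases $t>T$ and $t\le T$ separately.

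For $t>T$ we have $H\equiv 0$ in $(\mathrm{X},u,\lambda)$, so it is trivially smooth and every derivative vanishes; the bound \eqref{eq:DHBound} holds because its right-hand side is nonnegative. Here we must extend $\mu$ by zero, or by any nonnegative function, outside $[t_0,T]$, and I would state this convention explicitly. For $t\le T$, Assumption~\ref{assum:fandell} makes $\ell(t,\cdot,\cdot)$ and $f(t,\cdot,\cdot)$ $C^2$ in $(\mathrm{X},u)$. The map $(\mathrm{X},u,\lambda)\mapsto \lambda^\top f(t,\mathrm{X},u)$ is a bilinear pairing of $\lambda$ with a $C^2$ function, so it is $C^2$ jointly. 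Measurability in $t$ for fixed arguments holds because $\ell(\cdot,\mathrm{X},u)$ and each component of $f(\cdot,\mathrm{X},u)$ are measurable, finite linear combinations with constant coefficients $\lambda$ are measurable, and multiplying by the measurable indicator preserves measurability.

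For the bound \eqref{eq:DHBound}, I would enumerate the derivative cases. If neither $y$ nor $z$ is $\lambda$, then $\partial H=\partial\ell+\lambda^\top\partial f$, whose norm is at most $\mu(t)(1+\|\mathrm{X}\|)+\|\lambda\|\mu(t)(1+\|\mathrm{X}\|)=\mu(t)(1+\|\mathrm{X}\|)(1+\|\lambda\|)$ by \eqref{eq:CarCond}. If exactly one of $y,z$ is $\lambda$, the derivative is $\partial_y f$ (or $f$ itself), bounded by $\mu(t)(1+\|\mathrm{X}\|)$, which is at most the claimed right-hand side. If both are $\lambda$, the derivative is $0$. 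The one point needing care is the norm convention: for mixed derivatives that are matrices or bilinear forms, $\|\lambda^\top \partial_y\partial_z f\|\le\|\lambda\|\,\|\partial_y\partial_z f\|$ holds for the induced operator norms, so I would fix those norms consistently.

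The final claim follows by applying the same reasoning to the first-order derivatives. For $t\le T$ we have $H_{x_i}=\ell_{x_i}+\lambda^\top f_{x_i}$. For fixed arguments, $\ell_{x_i}(\cdot,\mathrm{X},u)$ is measurable because it is the pointwise limit of the difference quotients $\epsilon_j^{-1}(\ell(t,\mathrm{X}+\epsilon_j e,u)-\ell(t,\mathrm{X},u))$ with $\epsilon_j\to0$, each of which is measurable in $t$; the same holds for $f_{x_i}$, and measurability is preserved by the linear combination and the indicator. This limit argument is the only mildly nontrivial step, and the difference quotients settle it. Everything else is direct bookkeeping.
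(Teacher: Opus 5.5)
Your proposal is correct and follows the same route as the paper: every property is read off from Assumption~\ref{assum:fandell} using the linear dependence of $H$ on $\lambda$, and measurability of $H_{x_i}$ follows from writing it as the pointwise limit of measurable finite-difference quotients of $\ell$ and $f$. Your explicit case split for $t>T$ (with $\mu$ extended by a nonnegative function beyond $T$) and your remark on operator-norm conventions only make explicit details that the paper's one-line proof leaves implicit.
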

\begin{proof}
The first three properties follow directly from Assumption~\ref{assum:fandell} and from the linear dependence of $H$ on $\lambda$, as given in \eqref{eq:HamDef}. Finally, $H_{x_i}(\cdot,\mathrm{X},u,\lambda )$ is measurable because it is the pointwise limit of measurable functions (written in terms of finite differences of $f$ and $\ell$).
% The fact that $H_{x_i}(t,\mathrm{X},u,\lambda ),\ i=0,\dots,k$ is well-defined and for all fixed $t$ it is continuous in $(\mathrm{X},u,\lambda )$ follows from our assumptions on $f$ and $\lambda$, as well as from the linear dependence of $H$ on $\lambda$.  Finally, \eqref{eq:DHBound} follows from \eqref{eq:CarCond}.
\end{proof}

%We make the following assumption regarding the convexity of the Hamiltonian in $u$.
\begin{assumption}\label{Assump:Huu}
There exists $R\succeq 0$ such that, for all $(t,\mathrm{X},u,\lambda )\in [t_0,T]\times \mathbb{R}^{p+n-1}$, it is 
$H_{uu}(t,\mathrm{X},u,\lambda )\succeq R$.
\end{assumption}

The Pontryagin Minimum Principle applies to our problem; see e.g. \cite[Theorem 3.5.2]{kolmanovskiui1996control} and the more general case in \cite{boccia2017maximum}.
\begin{theorem}\label{Thm:KMPMP}
If the OCP \eqref{eq:optcont} admits an optimal controller-trajectory pair $(u^*,x^*)$ with co-state $\lambda^* \colon [t,T]\to \mathbb{R}^n$, then, almost everywhere in $[t_0,T]$,
\begin{equation}\label{eq:PMP}
\begin{array}{lll}
&\hspace{-8mm}H(\cdot,\mathrm{X}^*_{h},u^*,\lambda^*)\vert_t = \min_{v\in U}H(t,\mathrm{X}^*_{h}(t),v,\lambda^*(t)),
\end{array}
\end{equation}
\begin{equation}\label{eq:Costate}
\begin{cases}
\dot{\lambda^*}(t) = -\sum_{j=0}^kH_{x_j}(\cdot,\mathrm{X}^*_{h},u^*,\lambda^* )\vert_{t+h_j},\\
\lambda^*(t)\equiv 0,\quad t \geq T. %s\geq T.
\end{cases}
\end{equation}
\end{theorem}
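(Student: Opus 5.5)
We establish the statement by reducing it to the Pontryagin Minimum Principle for time-delay systems in \cite[Theorem 3.5.2]{kolmanovskiui1996control} (see also \cite{boccia2017maximum}). The work is to check that our setting meets the hypotheses there, and that the costate equation there, written in its own notation, coincides with \eqref{eq:Costate} once $H$ is defined as in \eqref{eq:HamDef}.

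\textbf{Step 1: hypotheses.} By Assumption~\ref{assum:fandell} and Proposition~\ref{Prop:HuCaratheodory}:
\begin{itemize}
\item $f$ and $\ell$ are Carath\'eodory functions, $C^2$ in $(\mathrm{X},u)$;
\item their first derivatives in $(\mathrm{X},u)$ are bounded by $\mu(t)(1+\|\mathrm{X}\|)$ with $\mu\in L^2$;
\item $U$ is compact and the admissible controls are measurable.
\end{itemize}
From these we obtain the following.
\begin{itemize}
\item For every $u\in\mathcal{U}$, the method of steps on the intervals $[t_0+(j-1)h_1,\,t_0+jh_1]$ combined with Gronwall's inequality gives a unique absolutely continuous solution of \eqref{eq:Dynamics}.
\item Along any trajectory, $\mathrm{X}_h$ is bounded.
\item The linear costate equation \eqref{eq:Costate} has coefficients $H_{x_j}$ that are integrable in $t$. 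It can be solved backward by steps, so $\lambda^*$ exists, is unique and is absolutely continuous.
\end{itemize}

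\textbf{Step 2: derivation.} Take a needle variation $u_\varepsilon$ that equals $v\in U$ on $[\tau,\tau+\varepsilon]$ and $u^*$ elsewhere, with $\tau$ a Lebesgue point. Linearize the state along $(x^*,u^*)$; the variation $\delta x$ satisfies a linear delay equation with coefficients $f_{x_j}(\cdot)\vert_t$. Compute $\frac{d}{dt}\lambda^{*\top}\delta x$ and integrate over $[t_0,T]$. In the terms $\int H_{x_j}\vert_t\,\delta x(t-h_j)\,\mathrm{d}t$, substitute $s=t-h_j$. This turns them into $\int H_{x_j}\vert_{s+h_j}\,\delta x(s)\,\mathrm{d}s$, restricted to $s\le T-h_j$.

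\textbf{Step 3: why the indicator and the terminal condition.} In Step 2 the restriction $s\le T-h_j$ is exactly encoded by the factor $\mathds{1}_{(-\infty,T]}$ in \eqref{eq:HamDef}. The terminal condition $\lambda^*\equiv 0$ on $t\ge T$ makes the boundary terms vanish: $\delta x(t_0+\theta)=0$ for $\theta\in[-h,0]$, and $\lambda^*(T)=0$. This yields
\[
J(u_\varepsilon)-J(u^*)=\varepsilon\bigl[H(\tau,\mathrm{X}_h^*(\tau),v,\lambda^*(\tau))-H(\tau,\mathrm{X}_h^*(\tau),u^*(\tau),\lambda^*(\tau))\bigr]+o(\varepsilon).
\]
Optimality forces the bracket to be nonnegative. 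Since a countable dense subset of $U$ suffices, continuity in $v$ gives \eqref{eq:PMP} almost everywhere.

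\textbf{Main obstacle.} The hard part is the uniform $o(\varepsilon)$ estimate: showing $\|x_\varepsilon-x^*-\varepsilon\,\delta x\|_\infty=o(\varepsilon)$ in the presence of delays. We would handle it by Gronwall on successive delay intervals. The second-derivative bound \eqref{eq:CarCond} controls the Taylor remainder, and the $L^2$ weight $\mu$ is paired with the bounded trajectories via Cauchy--Schwarz. This is the standard content of the cited theorem, which we invoke rather than reprove.
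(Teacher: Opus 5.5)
Your proposal is correct and matches the paper. The paper gives no independent argument either: it simply invokes the time-delay minimum principle \cite[Theorem 3.5.2]{kolmanovskiui1996control}, together with the more general result in \cite{boccia2017maximum}, and your needle-variation sketch (including how the factor $\mathds{1}_{(-\infty,T]}$ in \eqref{eq:HamDef} encodes the truncation $s\le T-h_j$) is supplementary motivation rather than something the paper uses. One small precision: the linearized variation $\delta x$ jumps at $\tau$, so only the boundary terms at $t_0$ and $T$ vanish, and the term $\lambda^*(\tau)^{\top}\delta x(\tau^+)$ is what supplies the $\lambda^{*\top}f$ part of the bracket; alternatively, work with the exact increment $x_\varepsilon-x^*$, as in the proof of Proposition~\ref{Prop:JDecrease}.
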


For a matrix $C\succ 0$, introduce the augmented Hamiltonian
    \begin{equation}\label{eq:AugHam}
    \begin{array}{lll}
    K(t,\mathrm{X},u,\lambda;v,C ) = H(t,\mathrm{X},u,\lambda )+(u-v )^{\top}C(u-v),
    \end{array}
    \end{equation}
which is strictly convex in $u$ by Assumption~\ref{Assump:Huu}.

Inspired by \cite{Sakawa1980}, we propose an extended Sakawa-Shindo algorithm (ESSA) to address the OCP \eqref{eq:optcont}.

\textbf{Step 0:} Select a nominal control $u^0\in \mathcal{U}$ and let $x^0$ be the corresponding solution of \eqref{eq:Dynamics}. Choose a diagonal matrix $C^1\succ 0$ and a tolerance $\eta_{\text{tol}}>0$. Set $i=1$.

\textbf{Step 1:} Given $u^{i-1}$ and $x^{i-1}$,  compute $\lambda^{i-1}$ by solving the TDS \eqref{eq:Costate} with $\mathrm{X}_{h}^*$, $u^*$ and $\lambda^*$ replaced with $\mathrm{X}_{h}^{i-1}$, $u^{i-1}$ and $\lambda^{i-1}$, respectively.

\textbf{Step 2:} Compute $x^i(t)$ and $u^i(t)$ by solving simultaneously
\begin{equation}\label{eq:Step2Simult}
\begin{array}{lll}
&\hspace{-8mm}\dot{x}^i(t) = f(\cdot,\mathrm{X}^i_{h},u^{i})\vert_t;\  x^i(t_0+\theta)=\phi(\theta),\ \theta\in [-h,0], \\
&\hspace{-8mm}K(\cdot,\mathrm{X}^i_{h},u^i,\lambda^{i-1};u^{i-1},C^i )\vert_t \\
&\hspace{2mm}= \min_{v\in U}K(t,\mathrm{X}^i_{h}(t),v,\lambda^{i-1}(t);u^{i-1}(t),C^i ).
\end{array}
\end{equation}

\textbf{Step 3:} If $\|u^i-u^{i-1} \|^2_{L^2([t_0,T])} \leq \eta_{\text{tol}}$, stop. Otherwise, compute $J(u^i)$. If $J(u^i)\geq J(u^{i-1})$, then increase all diagonal entries of $C^i$, and go back to Step 2; else, set $i\leftarrow i+1$ and $C^{i+1}=C^i$, and go back to Step 1. 

Corollary~\ref{Cor:Termin} in Section~\ref{sec:theoreticalresults} and the subsequent discussion show that the algorithm terminates in a finite number of steps. 

\textbf{Implementation:} Steps 1 and 2 are implemented by discretizing the time domain $[t_0,T]$ via a dense grid $\mathcal{G}$, given by $t_0=\rho_0<\rho_1<\dots <\rho_N=T_0$. Then, $u^i$ is obtained via minimisation on the grid, where $u^i\vert_{[\tau_j,\tau_{j+1})}\equiv u(\tau_j)$ for $j\in \{0 \}\cup[N-1]$, whereas the TDSs in \eqref{eq:Costate} and \eqref{eq:Step2Simult} are solved by a finite-difference scheme, via the step method. Thus, the computed controller $u^i$ is piecewise constant. See \cite{Sakawa1980}, and Propositions~\ref{Prop:WPx} and~\ref{Prop:LambdBound} in Section~\ref{sec:theoreticalresults}, for further details. 

\section{Theoretical Guarantees for ESSA}\label{sec:theoreticalresults}
%The aim of this section is to show that the guarantees of the S\&S algorithm in \cite{Sakawa1980} are guaranteed to hold for the ESSA.

The following proposition guarantees well-posedness of \eqref{eq:Dynamics} for any $u
\in \mathcal{U}$, and uniform boundedness of the solutions.
\begin{proposition}\label{Prop:WPx}
For fixed $u\in \mathcal{U}$ and $\phi\in \mathcal{C}([-h,0])$, there exist an absolutely continuous function $x$ that solves \eqref{eq:Dynamics} and some $M_{1}>0$ such that $\|x \|_{L^{\infty}([t_0,T]) }\leq M_{1}$ uniformly in $\mathcal{U}$.
\end{proposition}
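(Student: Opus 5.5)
We will build the solution by the method of steps combined with a Carathéodory existence argument, and derive the uniform bound from a Gronwall-type inequality that uses only the linear growth in \eqref{eq:CarCond}. Take $\alpha_y=\alpha_z=0$ in Assumption~\ref{assum:fandell}. Then for every $v\in U$ and $\mathrm{X}\in\mathbb{R}^{(k+1)n}$,
\begin{equation*}
\|f(t,\mathrm{X},v)\|\leq \mu(t)(\|\mathrm{X}\|+1).
\end{equation*}
Since $\mu\in L^2([t_0,T])\subset L^1([t_0,T])$ on a bounded interval, this is an integrable linear-growth bound that holds for all admissible control values at once. For fixed $u\in\mathcal{U}$, the map $g(t,\mathrm{X}):=f(t,\mathrm{X},u(t))$ is measurable in $t$ and $C^1$ in $\mathrm{X}$. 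Measurability holds because $f$ is Carathéodory in $(t,(\mathrm{X},u))$ and $u$ is measurable, so the composition is measurable. The first-derivative bound in \eqref{eq:CarCond} shows that $g$ is locally Lipschitz in $\mathrm{X}$, with a constant $\mu(t)(r+1)$ on balls of radius $r$.

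\textbf{Local existence.} We first treat the interval $[t_0,t_0+h_1]$. There all delayed arguments $x(t-h_j)$, $j\geq 1$, equal the known continuous function $\phi(t-t_0-h_j)$, so \eqref{eq:Dynamics} becomes an ODE $\dot x=\tilde g(t,x)$ in Carathéodory form. By the Carathéodory existence and uniqueness theorem (local Lipschitz with integrable modulus), it has a unique local absolutely continuous solution. We then continue step by step on $[t_0+h_1,t_0+2h_1]$, and so on. Alternatively, we can set up a Picard iteration directly on $\mathcal{C}([t_0-h,T])$ for the integral equation
\begin{equation*}
x(t)=\phi(0)+\int_{t_0}^t f(s,\mathrm{X}_h(s),u(s))\,\mathrm{d}s .
\end{equation*}

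\textbf{A priori bound and global existence.} Define $m(t)=\sup_{s\in[t_0-h,t]}\|x(s)\|$. Since $\|\mathrm{X}_h(s)\|\leq \sqrt{k+1}\,m(s)$, the integral equation gives
\begin{equation*}
m(t)\leq \|\phi\|_\infty+\int_{t_0}^t\mu(s)\big(\sqrt{k+1}\,m(s)+1\big)\,\mathrm{d}s .
\end{equation*}
We take the supremum over $[t_0-h,t]$ on the left, using that the right-hand side is nondecreasing in $t$. Gronwall's inequality (integral form, $L^1$ kernel) then yields
\begin{equation*}
m(t)\leq \big(\|\phi\|_\infty+\|\mu\|_{L^1}\big)\exp\!\big(\sqrt{k+1}\,\|\mu\|_{L^1}\big)=:M_1 .
\end{equation*}
This bound does not depend on $u$, because the growth estimate holds uniformly over $v\in U$. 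It therefore gives the claimed uniform bound over $\mathcal{U}$. It also rules out finite-time blow-up, so the local solution extends to all of $[t_0,T]$. Absolute continuity follows from the integral representation, since $\|\dot x(t)\|\leq\mu(t)(\sqrt{k+1}M_1+1)\in L^1$.

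\textbf{Main obstacle.} The main technical point is the local existence step. $f$ is only measurable in $t$, and the delayed arguments make the system non-ODE beyond $t_0+h_1$. The method of steps handles the delays only if $h_1>0$, which holds here because the delays are strictly increasing with $h_0=0<h_1$. Measurability of $t\mapsto f(t,\mathrm{X}_h(t),u(t))$ also needs justifying. I would argue it from the Carathéodory property, via approximation of $(\mathrm{X}_h,u)$ by simple functions and continuity of $f$ in its non-time arguments. The local Lipschitz bound from \eqref{eq:CarCond} with $|\alpha|=1$ gives uniqueness, but uniqueness is not strictly needed for the statement.
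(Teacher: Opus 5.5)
Your proof is correct and follows essentially the same route as the paper: the method of steps on subintervals of length at most $h_1$, Carath\'eodory local existence with uniqueness from the first-derivative bound in \eqref{eq:CarCond}, and a Gr\"onwall--Bellman estimate that both rules out blow-up and bounds the solution uniformly in $u$. The only difference is that you apply Gr\"onwall once to the running supremum $m(t)=\sup_{s\in[t_0-h,t]}\|x(s)\|$, which absorbs the delayed arguments and gives the explicit constant $M_1=(\|\phi\|_\infty+\|\mu\|_{L^1})\exp(\sqrt{k+1}\,\|\mu\|_{L^1})$ directly, whereas the paper bounds the solution on the first interval and then proceeds inductively interval by interval.
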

\begin{proof}
The intersection of the sets $\{t_0+kh_i \}_{k\in \mathbb{N}_0,i\in[k]}$ and $[t_0,T]$ induces a partition of the interval of the form $t_0=\tau_1<\dots<\tau_N=T$, such that, for all $j\in [N-1]$ and $t\in [\tau_j,\tau_{j+1}]$, we have $t-h_i \leq \tau_j$. We apply the step method over the sub-intervals $\{[\tau_j,\tau_{j+1}] \}_{j=1}^{N-1}$.

Consider $[t_0,\tau_1]$. Since $h_1<\dots<h_k$, we have $[t_0,\tau_1]=[t_0,t_0+h_1]$ and, on the interval, the TDS \eqref{eq:Dynamics} has the form
\begin{equation}\label{eq:FirstInterv}
\hspace{-2mm}  \dot{x}(t) = f(\cdot,x,\phi(\cdot-t_0-h_1),\dots ,\phi(\cdot-t_0-h_k),u )\vert_t. 
\end{equation}
Denote the right-hand side by $g_1(t,x(t))$. 
By Assumption~\ref{assum:fandell} on $f$, $g_1(t,x)$ is continuous in $x$ for any fixed $t$. Moreover, the functions $\phi (\cdot-t_0-h_i )$, $i\in [k]$, are continuous, whereas $u$ is measurable and bounded, since $U$ is compact; thus, $\phi (\cdot-t_0-h_i )$, $i\in [k]$, and $u$ are uniform limits of simple measurable functions, whence, by Assumption~\ref{assum:fandell} on $f$, for a fixed $x$,
$g_1(t,x)$ is the pointwise limit of measurable functions, whence measurable. Also, by \eqref{eq:CarCond} and continuity of $\phi$, for any compact $K\subset \mathbb{R}^n$ there exists a constant $\mathcal{D}_K$ such that $\| g_1(t,x)\|\leq \mathcal{D}_K\mu(t)$, for all $(t,x)\in [0,\infty)\times K$. Hence, by the Carathéodory existence theorem \cite{hale2009ordinary}, \eqref{eq:FirstInterv} with initial condition $x(t_0)=\phi(0)$ has a local Carathéodory solution on $[t_0,t_0+\epsilon)$ with $\epsilon>0$. In addition, since \eqref{eq:CarCond} and the mean value theorem imply the existence of some $\mathcal{D}^1_{K}>0$ such that $\|g_1(t,x)-g_1(t, \hat x ) \|\leq \mathcal{D}_K^1 \mu(t)\|x- \hat x \|$ for all $(t,x),(t,\hat x)\in [0,\infty)\times K$, the local solution is unique. We now show that on $[t_0,t_0+h_1]$, the solution remains bounded in a compact; thus, it can be extended up to $t=t_0+h_1$. For $t\in [t_0,t_0+\epsilon)$ , the combination of \eqref{eq:CarCond} with \eqref{eq:FirstInterv} yields
\begin{equation*}
\begin{array}{lll}
&\|x(t) \| \leq \|\phi(0) \| + \int_{t_0}^{t}\|g(s,x(s)) \|\mathrm{d}s\\
&\qquad \leq \alpha+\int_{t_0}^t\mu(s)\|x(s) \|\mathrm{d}s,
\end{array}
\end{equation*}
where $\alpha = k (1+\|\phi \|_{L^\infty([-h,0])} ) (1+\|\mu \|_{L^1([t_0,T])})$.
By the Grönwall-Bellman inequality, $\|x(t) \|\leq \alpha \mathrm{e}^{\| \mu\|_{L^1([t_0,T])}}$. Next, consider \eqref{eq:Dynamics} on $[\tau_1,\tau_2]$ with the initial condition $x(\tau_1)=x(t_0+h_1)$ obtained from the previous step. Noting that $t\in [\tau_1,\tau_2]$ implies that $t-h_i \leq \tau_1$, $i\in [k]$, we can repeat the same arguments to obtain the existence of an absolutely continuous solution on $[\tau_1,\tau_2]$. The patching of the solutions on $[\tau_0,\tau_1]$ and $[\tau_1,\tau_2]$ is again an absolutely continuous solution on $[\tau_0,\tau_2]$, in view of continuity at $t=\tau_1$. Continuing inductively, we obtain the claim of the proposition.
\end{proof}
\begin{corollary}\label{cor:CostBounded}
Functional $J(u)$, $u\in \mathcal{U}$, in \eqref{eq:PerformanceInd} is bounded. 
\end{corollary}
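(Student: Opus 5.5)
We bound the integrand of \eqref{eq:PerformanceInd} pointwise, uniformly over $\mathcal{U}$, by an integrable function of $t$ and then integrate. The ingredients are the growth condition \eqref{eq:CarCond} with $\alpha_y=\alpha_z=0$ and the uniform $L^\infty$ bound on trajectories from Proposition~\ref{Prop:WPx}.

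\textbf{Step 1: bound the delayed state vector.} Fix $u\in\mathcal{U}$ and let $x$ be the corresponding solution from Proposition~\ref{Prop:WPx}. For $t\in[t_0,T]$ and $i\in\{0\}\cup[k]$, the argument $t-h_i$ lies either in $[t_0,T]$ or in $[t_0-h,t_0)$. In the first case $\|x(t-h_i)\|\le M_1$. In the second case $x(t-h_i)=\phi(t-h_i-t_0)$, so $\|x(t-h_i)\|\le\|\phi\|_{L^\infty([-h,0])}$, which is finite because $\phi$ is continuous on a compact interval. Hence
\[
\|\mathrm{X}_h(t)\|\le \sqrt{k+1}\,\max\{M_1,\|\phi\|_{L^\infty([-h,0])}\}=:M_2,
\]
where $M_2$ does not depend on $u$ or $t$.

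\textbf{Step 2: bound the integrand and integrate.} Since $u(t)\in U$, \eqref{eq:CarCond} applied to $\psi=\ell$ gives $|\ell(t,\mathrm{X}_h(t),u(t))|\le \mu(t)(1+M_2)$ for a.e.\ $t$. Because $\mu\in L^2([t_0,T])$ and $[t_0,T]$ is bounded, Cauchy--Schwarz gives $\mu\in L^1([t_0,T])$ with $\|\mu\|_{L^1}\le \sqrt{T-t_0}\,\|\mu\|_{L^2}$. Therefore
\[
|J(u)|\le (1+M_2)\sqrt{T-t_0}\,\|\mu\|_{L^2([t_0,T])}
\]
for every $u\in\mathcal{U}$, so $J$ is bounded on $\mathcal{U}$.

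\textbf{Step 3: measurability of the integrand.} The main technical point, though not a hard one, is that $t\mapsto \ell(t,\mathrm{X}_h(t),u(t))$ is measurable, so that the integral is well defined. The function $\ell$ is Carathéodory: it is measurable in $t$ and continuous in $(\mathrm{X},u)$. The map $t\mapsto(\mathrm{X}_h(t),u(t))$ is measurable, since $x$ is continuous, $\phi$ is continuous and $u$ is measurable. Its composition with $\ell$ is therefore measurable. This can be shown by the same approximation-by-simple-functions argument used in the proof of Proposition~\ref{Prop:WPx} for $g_1$: approximate $(\mathrm{X}_h,u)$ pointwise by simple measurable functions and pass to the limit using continuity of $\ell$ in $(\mathrm{X},u)$.
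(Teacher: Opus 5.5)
Your proof is correct and follows the same route as the paper. You bound the integrand pointwise by $\mu(t)(1+\|\mathrm{X}_h(t)\|)$ via \eqref{eq:CarCond}, control $\|\mathrm{X}_h(t)\|$ uniformly in $u$ through $M_1$ from Proposition~\ref{Prop:WPx}, and integrate using $\mu\in L^1([t_0,T])$. Your treatment of delayed arguments that land in $[t_0-h,t_0)$ (where $\|\phi\|_{L^\infty([-h,0])}$ is the relevant bound) and your measurability check are details the paper leaves implicit, and both are fine. Rename your constant, though, since $M_2$ already denotes the costate bound in Proposition~\ref{Prop:LambdBound}.
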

\begin{proof}
Consider $u\in \mathcal{U}$ and the corresponding solution $x$ of \eqref{eq:Dynamics}.
By Assumption~\ref{assum:fandell} on $\ell$, we have that for $t\in [t_0,T]$, $|\ell(\cdot,\mathrm{X}_{h},u )\vert_t |\leq \mu(t)(1+ \sqrt{k+1} M_1)$. Integrating the latter, we have $|J(u) |\leq \|\mu \|_{L^1([t_0,T] )}(1+\sqrt{k+1} M_1)$.
\end{proof}

A similar well-posedness result to Proposition~\ref{Prop:WPx} holds for the TDS satisfied by $\lambda^{i-1}$ in \eqref{eq:Costate} in \textbf{Step 1} of the algorithm.
\begin{proposition}\label{Prop:LambdBound}
Given $u\in \mathcal{U}$ and an absolutely continuous $x \colon [t_0,T]\to \mathbb{R}^n$ with $\|x \|_{L^{\infty}([t_0,T])}\leq M_{1}$, system \eqref{eq:Costate} with $\mathrm{X}_{h}^*$, $u^*$ and $\lambda^*$ replaced by $\mathrm{X}_{h}$, $u$ and $\lambda$, respectively, has an absolutely continuous solution $\lambda$. Also, $\|\lambda\|_{L^{\infty}([t_0,T] )}\leq M_2$, uniformly over $u \in \mathcal{U}$ and $x$ satisfying our assumptions, for some $M_2>0$. 
\end{proposition}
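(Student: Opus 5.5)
The costate system is linear in $\lambda$ and is solved backward in time. I will mirror the step method of Proposition~\ref{Prop:WPx}, reversing the direction of time. Set $A_j(t)=f_{x_j}(\cdot,\mathrm{X}_h,u)\vert_t^{\top}$ and $b_j(t)=\ell_{x_j}(\cdot,\mathrm{X}_h,u)\vert_t^{\top}$ for $t\in[t_0,T]$, and set both to zero for $t>T$; this matches the indicator in \eqref{eq:HamDef}. Since $x$ is absolutely continuous, extended by $\phi$ on $[t_0-h,t_0]$, and $u$ is measurable and bounded, the argument of Proposition~\ref{Prop:WPx} applies: approximate $\mathrm{X}_h$ and $u$ uniformly by simple functions and invoke Proposition~\ref{Prop:HuCaratheodory}. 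This shows that $A_j$ and $b_j$ are measurable. By \eqref{eq:CarCond} they satisfy $\|A_j(t)\|,\|b_j(t)\|\le \mu(t)(1+\sqrt{k+1}\,\bar M)$ with $\bar M=\max(M_1,\|\phi\|_{L^\infty})$, so they are $L^2$ and hence $L^1$ on $[t_0,T]$. The costate equation then reads
\[
\dot\lambda(t)=-\sum_{j=0}^k\big(A_j(t+h_j)\lambda(t+h_j)+b_j(t+h_j)\big),\qquad \lambda(t)=0,\ t\ge T.
\]

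For existence, I partition $[t_0,T]$ backward by the points $T-mh_1$, $m\in\mathbb{N}_0$, intersected with $[t_0,T]$. On each interval $[T-(m+1)h_1,T-mh_1]\cap[t_0,T]$, every term with $j\ge1$ involves $\lambda$ only at times $t+h_j\ge t+h_1$. These values are already known from the previous steps, or from the terminal condition $\lambda\equiv0$ beyond $T$. So on each interval the equation is a linear ODE $\dot\lambda=-A_0(t)\lambda+g(t)$ with $A_0\in L^1$ and a known $g\in L^1$. Such a linear Carathéodory system has a unique absolutely continuous solution on the whole interval; there is no blow-up because the equation is linear with integrable coefficients. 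Patching the pieces at the nodes preserves continuity, which gives an absolutely continuous $\lambda$ on $[t_0,T]$.

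For the uniform bound, I integrate from $t$ to $T$ and use $\lambda(T)=0$. Substituting $s\mapsto s-h_j$ and dropping the contributions beyond $T$, which vanish, gives
\[
\|\lambda(t)\|\le \sum_{j}\int_t^T\big(\|A_j(s)\|+\|b_j(s)\|\big)\big(\|\lambda(s)\|+1\big)\,\mathrm{d}s \le \beta+\int_t^T c\,\mu(s)\,\|\lambda(s)\|\,\mathrm{d}s .
\]
Here $c=(k+1)(1+\sqrt{k+1}\,\bar M)$, and $\beta=2c\|\mu\|_{L^1([t_0,T])}$ covers the $b_j$ terms and the additive constant. The delayed terms $\lambda(s)$ with $s=t+h_j$ stay in $[t,T]$ or vanish, so the right-hand side is controlled by $\sup_{[t,T]}\|\lambda\|$.

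The step that needs care is exactly this one. The Grönwall inequality in its standard form does not directly apply to the delayed, backward-in-time argument. I would therefore work with $w(t)=\sup_{s\in[t,T]}\|\lambda(s)\|$, which is nonincreasing. Since $\|\lambda(s)\|\le w(s)$ for all $s\in[t,T]$, the display above gives $w(t)\le\beta+c\int_t^T\mu(s)w(s)\,\mathrm{d}s$. A backward Grönwall--Bellman inequality then yields $w(t)\le\beta e^{c\|\mu\|_{L^1}}=:M_2$. This bound depends only on $M_1$, $\phi$, $k$ and $\mu$, so it is uniform over $u\in\mathcal{U}$ and over all admissible $x$.
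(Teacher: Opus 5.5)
Your proof is correct. Its existence part is the paper's backward step method: your nodes $T-mh_1$ play the role of the paper's $s_j$. You streamline it by observing that on each sub-interval the equation is a linear Carath\'eodory system $\dot\lambda=-A_0(t)\lambda+g(t)$ with integrable data, so existence and uniqueness on the whole piece are immediate. The paper instead argues local existence, uniqueness and extensibility as in Proposition~\ref{Prop:WPx}.

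The real difference is in the uniform bound. The paper applies Gr\"onwall--Bellman sub-interval by sub-interval, so its constant is built up over the $R-1$ pieces. You integrate over all of $[t,T]$, shift $s\mapsto s-h_j$, and use that the coefficients vanish beyond $T$. This gives one global inequality and the explicit constant $M_2=\beta e^{c\|\mu\|_{L^1}}$, independent of the number of pieces. It is the same device the paper itself uses later with $\mathcal{M}$ in \eqref{eq:Deltax4}. Your $\bar M=\max(M_1,\|\phi\|_{L^\infty})$ also correctly accounts for $\phi$ entering $\mathrm{X}_h$ near $t_0$, which the paper glosses over.

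Two minor points. First, the running supremum $w$ is not needed. After the substitution every argument of $\lambda$ already lies in $[t,T]$, so $\|\lambda(t)\|\le\beta+c\int_t^T\mu(s)\|\lambda(s)\|\,\mathrm{d}s$ is already in standard backward Gr\"onwall form. Second, the middle expression in your display, $(\|A_j\|+\|b_j\|)(\|\lambda\|+1)$, only yields the coefficient $2c$ in front of $\mu\|\lambda\|$. Bound $\|A_j\|\|\lambda\|+\|b_j\|$ directly instead; that gives your final inequality, even with $\beta=c\|\mu\|_{L^1}$.
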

\begin{proof}
The proof is similar to that of Proposition~\ref{Prop:WPx}. Let $t_0=s_1<\dots <s_R=T$ be such that, for all $j\in [R-1]$ and $t\in [s_j,s_{j+1}]$, it is $t+h_i \geq s_{j+1}$, $i\in [k]$. We employ the step method on $\{[s_j,s_{j+1}] \}_{j=1}^{R-1}$ backwards. In the interval $[s_{R-1},s_R] = [T-h_1,T]$, \eqref{eq:Costate} has the form $\dot{\lambda}(t) = H_{x_0}(t\cdot,\mathrm{X}_{h},u,\lambda )\vert_{t}$, with $\lambda(T)=0$:
% \begin{equation*}
% \dot{\lambda}(t) = H_{x_0}(t,\mathrm{X}_{h}(t),u(t),\lambda(t) ), \ \lambda(T)=0,
% \end{equation*}
$t+h_i \geq T$, $i\in [k]$, implies that $H_{x_i}(\cdot,\mathrm{X}_{h},u,\lambda )\vert_{t+h_i}\equiv 0,\ i\in [k]$. The local existence of a Carathéodory solution, uniqueness and extensibility to $[s_{R-1},s_R]$ follow as in Proposition~\ref{Prop:WPx}, by employing Proposition~\ref{Prop:HuCaratheodory} and the Grönwall-Bellman inequality, together with \eqref{eq:DHBound} and $\|x \|_{L^{\infty}([t_0,T])}\leq M_{1}$. Then, repeating the arguments on the previous sub-intervals yields the result.
\end{proof}

Propositions~\ref{Prop:WPx} and~\ref{Prop:LambdBound} yield well-posedness of the implementation of the ESSA.
Differently from \cite{Sakawa1980}, our more general assumptions on $f$ and $\ell$ yield a bound on the $L^2$ norm of the discrepancy $\Delta_u^i :=u^i-u^{i-1}$ along the ESSA iterations.
\begin{proposition}\label{Prop:Regulariz}
There exists $M_3>0$ such that, for all $i\in \mathbb{N}$, $\varepsilon_{min}^i\|\Delta_u^i \|_{L^2([t_0,T])}^2\leq M_{3}$, where $\varepsilon_{min}^i>0$ is the smallest eigenvalue of the diagonal matrix $C^i \succ 0$. 
\end{proposition}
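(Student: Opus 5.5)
The plan is to use the minimising property of $u^i$ in Step 2 against the admissible competitor $v=u^{i-1}(t)\in U$. Then, at almost every $t\in[t_0,T]$,
\begin{equation*}
\begin{array}{lll}
&\hspace{-5mm}H(t,\mathrm{X}^i_{h}(t),u^i(t),\lambda^{i-1}(t))+\Delta_u^i(t)^{\top}C^i\Delta_u^i(t)\\
&\hspace{10mm}\leq H(t,\mathrm{X}^i_{h}(t),u^{i-1}(t),\lambda^{i-1}(t)).
\end{array}
\end{equation*}
Since $C^i$ is diagonal and positive definite, $\Delta_u^{i\top}C^i\Delta_u^i\geq \varepsilon^i_{min}\|\Delta_u^i\|^2$. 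This gives the pointwise estimate
\begin{equation*}
\varepsilon_{min}^i\|\Delta_u^i(t)\|^2\leq H(t,\mathrm{X}^i_{h}(t),u^{i-1}(t),\lambda^{i-1}(t))-H(t,\mathrm{X}^i_{h}(t),u^i(t),\lambda^{i-1}(t)).
\end{equation*}
It therefore suffices to bound the right-hand side by an integrable function of $t$ that does not depend on $i$.

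I would bound the Hamiltonian difference in one of two ways. The first is the mean value theorem in $u$ on the convex set $U$, together with \eqref{eq:DHBound} for $H_u$ (the case $\alpha_y=1$ with $y=u$, $\alpha_z=0$). This gives
\begin{equation*}
|H(\cdot,u^{i-1})-H(\cdot,u^{i})|\leq \mu(t)(1+\|\mathrm{X}^i_h(t)\|)(1+\|\lambda^{i-1}(t)\|)\operatorname{diam}(U).
\end{equation*}
The second, which is even simpler, is to bound each term directly by \eqref{eq:DHBound} with $\alpha_y=\alpha_z=0$. For the state, Proposition~\ref{Prop:WPx} gives $\|x^i\|_{L^\infty}\leq M_1$ uniformly over $\mathcal{U}$, with the prehistory $\phi$ handled by its sup norm. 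Hence $\|\mathrm{X}^i_h(t)\|\leq \sqrt{k+1}\max(M_1,\|\phi\|_{L^\infty})$. For the co-state, Proposition~\ref{Prop:LambdBound} gives $\|\lambda^{i-1}\|_{L^\infty}\leq M_2$. Integrating over $[t_0,T]$ then yields
\begin{equation*}
\varepsilon^i_{min}\|\Delta_u^i\|^2_{L^2}\leq 2\|\mu\|_{L^1}\bigl(1+\sqrt{k+1}\max(M_1,\|\phi\|_{L^\infty})\bigr)(1+M_2)=:M_3 .
\end{equation*}
This $M_3$ is finite because $\mu\in L^2$ on a bounded interval, so $\mu\in L^1$.

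The main obstacle is justifying that the bounds are genuinely uniform in $i$. Two points need care. First, $x^i$ solves the coupled system \eqref{eq:Step2Simult}, in which $u^i$ depends on $x^i$, so I must check that the pair produced in Step 2 is an admissible control–trajectory pair. Concretely, $u^i$ must be measurable and $U$-valued, which follows because the strictly convex minimiser is unique and depends measurably on $t$. Once that holds, $x^i$ is the solution of \eqref{eq:Dynamics} for the control $u^i\in\mathcal{U}$, and Proposition~\ref{Prop:WPx} applies with its $\mathcal{U}$-uniform constant $M_1$. Second, $\lambda^{i-1}$ is built from $(x^{i-1},u^{i-1})$, which satisfies the hypotheses of Proposition~\ref{Prop:LambdBound}, so $M_2$ is also uniform. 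With both in place, $M_3$ is independent of $i$ and of the choice of $C^i$, including the repeated increases of $C^i$ in Step 3.
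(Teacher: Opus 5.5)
Your proof is correct, and it closes the estimate more simply than the paper does. Both arguments start from the same pointwise inequality, obtained by testing the Step~2 minimisation against $v=u^{i-1}(t)$. From there the paper applies the mean value theorem in $u$ to get $\varepsilon^i_{min}\|\Delta^i_u\|^2\le M_4\mu(t)\|\Delta^i_u\|$. It then uses Young's inequality with weight $\varepsilon^i_{min}$, together with the monotonicity $\varepsilon^i_{min}\ge\varepsilon^0_{min}$, to reach $M_3=M_4^2\|\mu\|^2_{L^2}/\varepsilon^0_{min}$. You instead bound the Hamiltonian difference directly by $2\mu(t)(1+\|\mathrm{X}^i_h(t)\|)(1+\|\lambda^{i-1}(t)\|)$, or by the mean value theorem with $\|\Delta^i_u\|$ replaced by the diameter of $U$, and then integrate.

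Your route needs only $\mu\in L^1$, and it avoids both Young's inequality and the monotonicity of $\{\varepsilon^i_{min}\}$. It also handles the prehistory $\phi$ entering $\mathrm{X}^i_h$ explicitly. The price is that your $M_3$ does not depend on $C^i$, so you only get $\|\Delta^i_u\|^2_{L^2}\le M_3/\varepsilon^i_{min}$. The paper's constant also shrinks with $\varepsilon^0_{min}$, giving $\|\Delta^i_u\|^2_{L^2}\le M_4^2\|\mu\|^2_{L^2}/(\varepsilon^i_{min}\varepsilon^0_{min})$. That is the sharper quantitative form behind the paper's remark about regularising by taking $\varepsilon^0_{min}\gg 1$.

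In your first route, you can divide by $\|\Delta^i_u(t)\|$ where it is nonzero instead of bounding it by the diameter of $U$. This gives $\varepsilon^i_{min}\|\Delta^i_u(t)\|\le M_4\mu(t)$, so you also recover a $C$-dependent constant, at the cost of needing $\mu\in L^2$.

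The well-posedness and measurability of the coupled Step~2 system, which you flag, is taken for granted in the paper as well. It is therefore not a gap relative to the paper's proof.
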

\begin{proof}
In view of \eqref{eq:AugHam} and the minimisation in \textbf{Step 2} of the algorithm, for a fixed $t$, we have $H(\cdot,\mathrm{X}_{h}^i,u^i,\lambda^{i-1})\vert_t + (\Delta_u^i)^\top C^i (\Delta_u^i)\vert_t \leq H(\cdot, \mathrm{X}_{h}^i,u^{i-1},\lambda^{i-1})$ and hence, by the mean value theorem,
\begin{equation}\label{eq:DiscUppBd}
\begin{array}{lll}
&\hspace{-5mm}\varepsilon_{min}^i\|\Delta_u^i \|^2\vert_t\leq (\int_0^1\varphi^i(s)\mathrm{d}s) \Delta_u^i \vert_t 
\end{array}
\end{equation}
where $\varphi^i(s) = H_u(\cdot,\mathrm{X}_{h}^i,su^{i-1}+(1-s)u^{i},\lambda^{i-1} )\vert_t$. We have  $\|\varphi^i(s) \|\leq \mu(t) (1+ \sqrt{K+1} M_1 )(1+M_2 )=:M_{4}\mu(t)$ by convexity of $U$, Propositions~\ref{Prop:WPx} and~\ref{Prop:LambdBound}, and arguments similar to the proof of Proposition~\ref{Prop:HuCaratheodory}. Therefore,
\begin{equation}\label{eq:DiscIneq}
\begin{array}{lll}
&\hspace{-5mm}\varepsilon_{min}^i\|\Delta_u^i \|^2\vert_t \leq M_{4}\mu(t)\|\Delta_u^i \|\vert_t\\
&\hspace{10mm}\leq \frac{M_{4}^2}{2\varepsilon^0
_{min}}\mu(t)^2+ \frac{\varepsilon^i_{min}}{2}\|\Delta_u^i \|^2\vert_t,
\end{array}
\end{equation}
as 
% $2ab\leq a^2+b^2$ and 
$\{\varepsilon_{min}^i\}_{i=0}^{\infty}\subseteq \mathbb{R}_{>0}$ is  non-decreasing. Integrating \eqref{eq:DiscIneq} over $[t_0,T]$ yields the result with $M_3=\frac{M_{4}^2}{\varepsilon^0_{min}}\|\mu \|^2_{L^2([t_0,T])}$.
\end{proof}
Proposition~\ref{Prop:Regulariz} shows that, by choosing $C^0$ with $\varepsilon^0_{min}\gg 1$, the ESSA can be regularised numerically so that the $L^2([t_0,T])$ errors between consecutive controllers are not large. Moreover, recalling the proposed \textbf{Implementation}, we have a bound on the $L^{\infty}([t_0,T])$ error between consecutive controllers, depending on the minimal separation of nodes in the grid $\mathcal{G}$.
\begin{corollary}\label{Cor:Disc}
For the proposed numerical \textbf{Implementation} with an arbitrary grid $\mathcal{G}$, denote $\mathfrak{d}_{\mathcal{G}} = \min_{j\in [N]}(\rho_j-\rho_{j-1} )$. If $\mathfrak{d}_{\mathcal{G}}\geq \mathfrak{d}_{*}>0$, then
$\varepsilon^i_{min}\|\Delta_u^i \|_{L^{\infty}([t_0,T] )}^2\leq \mathfrak{d}_*^{-1}M_3$.
\end{corollary}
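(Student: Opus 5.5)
The plan is to combine the $L^2$ bound of Proposition~\ref{Prop:Regulariz} with the fact that, in the proposed \textbf{Implementation}, both $u^i$ and $u^{i-1}$ are piecewise constant on the cells of the grid $\mathcal{G}$. As a consequence, $\Delta_u^i$ is itself piecewise constant on the intervals $[\rho_{j-1},\rho_j)$, $j\in[N]$. For such functions the $L^\infty$ norm is attained on a single cell, and the mass of that cell is bounded below by the minimal separation $\mathfrak{d}_{\mathcal{G}}\ge\mathfrak{d}_*$. This is what lets us pass from an $L^2$ estimate to an $L^\infty$ estimate.

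Concretely, write $\Delta_u^i\vert_{[\rho_{j-1},\rho_j)}\equiv c_j\in\mathbb{R}^m$. Then
\[
\|\Delta_u^i\|_{L^\infty([t_0,T])}=\max_{j\in[N]}\|c_j\|,
\]
which equals $\|c_{j^*}\|$ for some index $j^*$. Next, estimate the $L^2$ norm from below by keeping only the $j^*$-th cell:
\[
\|\Delta_u^i\|^2_{L^2([t_0,T])}=\sum_{j=1}^N(\rho_j-\rho_{j-1})\|c_j\|^2\ \ge\ (\rho_{j^*}-\rho_{j^*-1})\|c_{j^*}\|^2\ \ge\ \mathfrak{d}_*\,\|\Delta_u^i\|^2_{L^\infty([t_0,T])}.
\]
Multiplying by $\varepsilon^i_{min}>0$ and invoking Proposition~\ref{Prop:Regulariz} gives
\[
\varepsilon^i_{min}\,\mathfrak{d}_*\,\|\Delta_u^i\|^2_{L^\infty}\le \varepsilon^i_{min}\|\Delta_u^i\|^2_{L^2}\le M_3 .
\]
Dividing by $\mathfrak{d}_*$ yields the claim.

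The only point requiring care is that Proposition~\ref{Prop:Regulariz} must apply to the discretised iterates. Its proof is pointwise in $t$: it uses the minimisation of $K$ in \textbf{Step 2} at each $t$, together with the bounds $M_1,M_2$ of Propositions~\ref{Prop:WPx} and~\ref{Prop:LambdBound}. In the implementation, the minimisation is performed at the grid nodes, and the resulting value is held constant on each cell. The pointwise inequality \eqref{eq:DiscIneq} therefore holds at each node $\rho_{j-1}$, and the constant $M_3$ should transfer.

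I expect this transfer to be the main (mild) obstacle. One might worry that integrating the node-evaluated inequality produces a Riemann-type sum of $\mu(\rho_{j-1})^2$ rather than $\|\mu\|^2_{L^2}$. The cleanest way around this is to avoid revisiting the proof at all and to take Proposition~\ref{Prop:Regulariz} as stated for the iterates produced by the algorithm, as the paper intends. The remaining steps are then immediate.
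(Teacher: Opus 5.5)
Your argument is correct and is essentially the paper's proof. Both use that $\Delta_u^i$ is constant on the grid cells, so its $L^\infty$ norm is attained on a single cell $\mathcal{I}_j$ of length at least $\mathfrak{d}_*$; this gives $\mathfrak{d}_*\|\Delta_u^i\|_{L^\infty([t_0,T])}^2\le\|\Delta_u^i\|_{L^2([t_0,T])}^2$, and Proposition~\ref{Prop:Regulariz} then finishes the proof. The caveat you raise about applying that proposition to the node-wise discretised iterates is equally unaddressed in the paper, which likewise takes the proposition as given (as does the implicit need for $u^0$ to be grid-piecewise-constant when $i=1$).
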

\begin{proof}
Since $u^i,u^{i-1}$ are piecewise constant, for some $j\in [N]$ it is $\|\Delta_u^i \|_{L^{\infty}([t_0,T] )}=|u^i(\rho_{j-1})-u^{i-1}(\rho_{j-1}) |$. Thus, for $\mathcal{I}_j=[\rho_{j-1},\rho_j]$,
since $\rho_j-\rho_{j-1} \geq \mathfrak{d}_\mathcal{G} \geq \mathfrak{d}_*$,
we have
$\varepsilon_{min}^i\|\Delta_u^i \|_{L^{\infty}([t_0,T] )}^2 \leq \frac{\varepsilon_{min}^i}{\mathfrak{d}_{\mathcal{G}}}\|\Delta_u^i \|_{L^{2}(\mathcal{I}_j )}^2 \leq \mathfrak{d}_*^{-1}M_3$.
\end{proof}
%Corollary~\ref{Cor:Disc} yields a bound on the $L^{\infty}([t_0,T])$ error between consecutive controllers, depending on the minimal separation of nodes in the grid $\mathcal{G}$.

% \begin{remark}
% Note that if we only assume $\mu\in L^1[t_0,T]$, Propositions~\ref{Prop:WPx} and~\ref{Prop:LambdBound} remain true, whereas in Proposition~\ref{Prop:Regulariz}, we only need to replace $L^2[t_0,T]$ with $L^1[t_0,T]$. Indeed, one simply needs to consider the first inequality in \eqref{eq:DiscIneq} for $t$ such that $u^i(t)\neq u^{i-1}(t)$, divide by $\|u^{i}(t)-u^{i-1}(t) \|$, and integrate obtained inequality on $[t_0,T]$.
% \end{remark}
We now show that the cost decreases along the iterations.
\begin{proposition}\label{Prop:JDecrease}
Given Assumption~\ref{Assump:Huu}, denote by $r_{min}>0$ the smallest eigenvalue of $R$.
There exists $M_5>0$ such that, for all $i\in \mathbb{N}_0$ and $\xi_i = 2\varepsilon_{min}^i+\frac{1}{2}r_{min}-M_5$, it holds
\begin{equation}\label{eq:JDecrease}
J(u^i )-J(u^{i-1})\leq -\xi_i\|\Delta_u^i\|_{L^2([t_0,T])}^2.
\end{equation}
In particular, if $\varepsilon_{min}^0$ is large enough, $\{J(u^i) \}_{i=0}^{\infty}$ is monotonically decreasing and, thus, convergent.
\end{proposition}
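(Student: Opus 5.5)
The plan is the classical Sakawa--Shindo cost-difference identity, adapted to the delays. Write $\Delta_x := x^i - x^{i-1}$ (with $\Delta_x \equiv 0$ on $[t_0-h,t_0]$, since both trajectories share $\phi$), $\Delta_{\mathrm{X}}(t) := \mathrm{X}^i_h(t)-\mathrm{X}^{i-1}_h(t)$, and use $\lambda := \lambda^{i-1}$ from \textbf{Step 1}. Since $H = \ell + \lambda^\top f$ on $[t_0,T]$,
\begin{equation*}
J(u^i)-J(u^{i-1}) = \int_{t_0}^T \Big[ H(\cdot,\mathrm{X}^i_h,u^i,\lambda) - H(\cdot,\mathrm{X}^{i-1}_h,u^{i-1},\lambda) - \lambda^\top \dot{\Delta}_x \Big]\Big\vert_t \,\mathrm{d}t .
\end{equation*}
Integrate $\int \lambda^\top\dot\Delta_x$ by parts; the boundary terms vanish because $\lambda(T)=0$ and $\Delta_x(t_0)=0$. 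Then use \eqref{eq:Costate}: the term $\int_{t_0}^T \dot\lambda^\top \Delta_x$ becomes $-\sum_j\int_{t_0}^T H_{x_j}\vert_{t+h_j}\Delta_x(t)\,\mathrm{d}t$. The change of variables $s=t+h_j$ turns this into $-\sum_j \int_{t_0}^{T} H_{x_j}\vert_s\, \Delta_x(s-h_j)\,\mathrm{d}s$. Here we use that $H\equiv 0$ for $s>T$ (this is where the indicator in \eqref{eq:HamDef} is used) and that $\Delta_x(s-h_j)=0$ for $s<t_0+h_j$. This yields exactly $-\int H_{\mathrm{X}}(\cdot,\mathrm{X}^{i-1}_h,u^{i-1},\lambda)\Delta_{\mathrm{X}}$, i.e.\ the delayed adjoint cancels the first-order state variation precisely as in the delay-free case.

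Next, split $H(\mathrm{X}^i,u^i)-H(\mathrm{X}^{i-1},u^{i-1})$ as
\begin{equation*}
\big[H(\mathrm{X}^i,u^i)-H(\mathrm{X}^i,u^{i-1})\big] + \big[H(\mathrm{X}^i,u^{i-1})-H(\mathrm{X}^{i-1},u^{i-1})-H_{\mathrm{X}}(\mathrm{X}^{i-1},u^{i-1})\Delta_{\mathrm{X}}\big].
\end{equation*}
The first bracket is handled by strong convexity and the minimisation in \textbf{Step 2}. Since $u^i$ minimises the convex function $v\mapsto K(t,\mathrm{X}^i_h,v,\lambda;u^{i-1},C^i)$ over the convex set $U$, the variational inequality $(H_u(\mathrm{X}^i,u^i)+2(\Delta_u^i)^\top C^i)(u^{i-1}-u^i)\ge 0$ holds. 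Combining it with the Taylor bound $H(\mathrm{X}^i,u^{i-1})\ge H(\mathrm{X}^i,u^i)+H_u(\mathrm{X}^i,u^i)(u^{i-1}-u^i)+\tfrac12 r_{min}\|\Delta_u^i\|^2$ (Assumption~\ref{Assump:Huu}) gives pointwise
\begin{equation*}
H(\mathrm{X}^i,u^i)-H(\mathrm{X}^i,u^{i-1}) \le -\big(2\varepsilon^i_{min}+\tfrac12 r_{min}\big)\|\Delta_u^i\|^2 ,
\end{equation*}
which produces the first two terms of $\xi_i$. The second bracket is a second-order Taylor remainder in $\mathrm{X}$. By \eqref{eq:DHBound}, Propositions~\ref{Prop:WPx} and~\ref{Prop:LambdBound}, it is bounded by $M\mu(t)\|\Delta_{\mathrm{X}}(t)\|^2 \le M\mu(t)\sum_j\|\Delta_x(t-h_j)\|^2$.

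The main obstacle is bounding $\Delta_x$ by $\Delta_u$ in a way that keeps $\int\mu\|\Delta_{\mathrm{X}}\|^2 \le M_5\|\Delta_u^i\|_{L^2}^2$ with $\mu$ only in $L^2$. To get this, write $\dot\Delta_x = f(\mathrm{X}^i,u^i)-f(\mathrm{X}^{i-1},u^{i-1})$ and bound it via the mean value theorem and \eqref{eq:CarCond} by $\mu(t)(1+\sqrt{k+1}M_1)\big(\sum_j\|\Delta_x(t-h_j)\|+\|\Delta_u^i(t)\|\big)$. Setting $m(t)=\sup_{s\le t}\|\Delta_x(s)\|$, this gives $m(t)\le c\int_{t_0}^t\mu m + c\int_{t_0}^t \mu\|\Delta_u^i\|$. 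Grönwall and Cauchy--Schwarz then yield $\|\Delta_x\|_{L^\infty}\le c'\|\mu\|_{L^2}\|\Delta_u^i\|_{L^2}$. Hence $\int\mu\|\Delta_{\mathrm{X}}\|^2\le (k+1)\|\mu\|_{L^1}c'^2\|\mu\|_{L^2}^2\|\Delta_u^i\|_{L^2}^2 =: M_5\|\Delta_u^i\|_{L^2}^2$, with $M_5$ independent of $i$ because $M_1,M_2$ are uniform over $\mathcal{U}$.

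Summing the pieces gives \eqref{eq:JDecrease}. Finally, $\varepsilon^i_{min}$ is non-decreasing along the iterations, so choosing $\varepsilon^0_{min}>M_5/2$ makes $\xi_i>0$ for all $i$. Then $J(u^i)$ is non-increasing, and since it is bounded by Corollary~\ref{cor:CostBounded}, it converges.
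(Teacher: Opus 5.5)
Your proposal is correct and follows essentially the same route as the paper: the same split into a control-variation term bounded via the Step~2 variational inequality together with $H_{uu}\succeq R$, and a state Taylor remainder whose first-order part is cancelled by the delayed co-state through the shift $s=t+h_j$ and integration by parts, followed by a Gr\"onwall--Cauchy--Schwarz bound $\|\Delta_x\|_{L^\infty}\leq c\,\|\mu\|_{L^2}\|\Delta_u^i\|_{L^2}$. The only differences are cosmetic: you integrate by parts before splitting, and you handle the delayed terms in Gr\"onwall via the running maximum $m(t)$ instead of the paper's shifted weight $\mathcal{M}(t)$.
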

\begin{proof}
Denote
$\Delta_{x,s}^i  = x^i(\cdot-s)-x^{i-1}(\cdot-s)$.
% \begin{equation*}
% \begin{array}{lll}
% &\hspace{-5mm}\Delta_{x,s}^i  = x^i(\cdot-s)-x^{i-1}(\cdot-s),
% % \\ 
% % &\Delta_{\mathrm{X}_{h}}(t) = \mathrm{X}^i_{h}(t)-\mathrm{X}_{h}^{i-1}(t),
% \text{ and } \Delta_u^i=u^i-u^{i-1}.
% \end{array}
% \end{equation*}
By \eqref{eq:Dynamics}, \eqref{eq:PerformanceInd} and \eqref{eq:HamDef}, we have 
\begin{equation*}
\begin{array}{lll}
&\hspace{-5mm}J(u^i)-J(u^{i-1}) = -\int_{t_0}^T(\lambda^{i-1}(t))^{\top}\dot{\Delta}^i_{x,h_0}(t)\mathrm{d}t\\
&\hspace{-4mm}+\int_{t_0}^T(H(\cdot,\mathrm{X}_{h}^i,u^i,\lambda^{i-1} )-H(\cdot,\mathrm{X}_{h}^i,u^{i-1},\lambda^{i-1} ))\vert_t \mathrm{d}t\\
&\hspace{-4mm}+\int_{t_0}^T(H(\cdot,\mathrm{X}_{h}^i,u^{i-1},\lambda^{i-1} )-H(\cdot,\mathrm{X}_{h}^{i-1},u^{i-1},\lambda^{i-1} ) )\vert_t\mathrm{d}t\\
&\hspace{-3mm}=:\mathcal{J}_1+\mathcal{J}_2+\mathcal{J}_3,
\end{array}
\end{equation*}
We treat the three integrals separately.

For $\mathcal{J}_2$, using Proposition~\ref{Prop:HuCaratheodory} and \eqref{eq:AugHam}, we can expand the integrand in a Taylor polynomial of second order around $u^i$:
\begin{equation}\label{eq:DiffH}
\begin{array}{lll}
&\hspace{-6mm}(H(\cdot,\mathrm{X}_{h}^i,u^i,\lambda^{i-1} )-H(\cdot,\mathrm{X}_{h}^i,u^{i-1},\lambda^{i-1} ))\vert_t \\
% &\hspace{-5mm} =(H_u(\cdot, \mathrm{X}_{h}^i,u^{i},\lambda^{i-1})\Delta_u-\frac{1}{2}\Delta_u^{\top}H_{uu}(\cdot,\mathrm{X}_{h}^i,\hat{u},\lambda^{i-1} )\Delta_u)\vert_t \\
&\hspace{-6mm}=(K_u(\cdot,\mathrm{X}_{h}^i,u^i,\lambda^{i-1};u^{i-1},C^i )\Delta_u^{i}-2(\Delta_u^i)^{\top}C^i\Delta_u^i \\
& -\frac{1}{2}(\Delta_u^i)^{\top}H_{uu}(\mathrm{X}_{h}^i,\hat{u},\lambda^{i-1} )\Delta_u^i)\vert_t, 
\end{array}
\end{equation}
for some $\hat{u}\in U$, by convexity of $U$.
Due to the minimisation in \textbf{Step 2}, the first term on the right-hand side is non-positive a.e. on $[t_0,T]$. Thus, integrating \eqref{eq:DiffH} and employing Assumption~\ref{Assump:Huu}, we have that $\mathcal{J}_2 \leq -\frac{1}{2}(4\varepsilon_{min}^i+r_{min})\|\Delta^i_u \|_{L^2([t_0,T])}^2$.

For $\mathcal{J}_3$, using Proposition~\ref{Prop:HuCaratheodory}, we can expand the integrand in a Taylor polynomial of second order around $\mathrm{X}_{h}^{i-1}$:
\begin{equation}\label{eq:Tay2}
\begin{array}{lll}
&(H(\cdot,\mathrm{X}_{h}^i,u^{i-1},\lambda^{i-1} )-H(\cdot,\mathrm{X}_{h}^{i-1},u^{i-1},\lambda^{i-1} ) ) \vert_t\\
&\hspace{2mm}= ( \frac{1}{2}(D^i)^{\top}\operatorname{Hess}_{\mathrm{X},H}(\cdot,\hat{\mathrm{X}},u^{i-1},\lambda^{i-1})D^i\\
% &\hspace{14mm}+H_{x_0}(\mathrm{X}_{h}^{i-1},u^{i},\lambda^{i-1} )\Delta_{x,0}\\
&\hspace{4mm}+ \sum_{j=0}^kH_{x_j}(\cdot,\mathrm{X}_{h}^{i-1},u^{i-1},\lambda^{i-1} )\Delta^i_{x,h_j} )\vert_t,
\end{array}
\end{equation}
where $D^i=\operatorname{col}\{\Delta^i_{x,h_0},\Delta^i_{x,h_1},\dots,\Delta^i_{x,h_k} \}$, $\operatorname{Hess}_{\mathrm{X},H}$ is the Hessian of $H$ with respect to $\mathrm{X}$ (see Proposition~\ref{Prop:HuCaratheodory}) and $\hat{\mathrm{X}}$ lies on the line between $\mathrm{X}_{h}^i(t)$ and $\mathrm{X}_{h}^{i-1}(t)$. By Propositions~\ref{Prop:HuCaratheodory}-\ref{Prop:LambdBound}, for some constants $M_6,M_7>0$,
\begin{equation*}
\begin{array}{lll}
&\hspace{-5mm}\int_{t_0}^T ((D^i)^{\top}\operatorname{Hess}_{\mathrm{X},H}(\cdot,\hat{\mathrm{X}},u^{i-1},\lambda^{i-1})D^i)\vert_t \mathrm{d}t\\
&\leq M_6\int_{t_0}^T\mu(t)(\sum_{j=0}^k \|\Delta_{x,h_j}^i\|^2)\vert_t\mathrm{d}t\\
&\leq M_7 \|\mu \|_{L^1([t_0,T])}\| \Delta_{x,h_0}^i\|^2_{L^{\infty}([t_0,T])},
% \int_{t_0}^T\mathcal{M}(t)\|\Delta_{x,0} \|^2 \mathrm{d}t,
\end{array}
\end{equation*}
where the last inequality follows from the fact that the initial condition $\phi$ is fixed, whence, for each $j\in \{0\} \cup [k]$,
\begin{equation*}
\begin{array}{lll}
&\int_{t_0}^T\mu(t)\|\Delta^i_{x,h_j}(t) \|^2\mathrm{d}t =\int_{h_j}^T\mu(t)\|\Delta^i_{x,h_0}(t-h_j) \|^2\mathrm{d}t\\
&\hspace{30mm}\leq \|\mu \|_{L^1([t_0,T])}\|\Delta^i_{x,h_0} \|^2_{L^{\infty}([t_0,T])}.
\end{array}
\end{equation*}
Similarly, for $j\in \{0\} \cup [k]$, we have 
\begin{equation*}
\begin{array}{lll}
&\hspace{-5mm}\int_{t_0}^TH_{x_j}(\cdot,\mathrm{X}_{h}^{i-1},u^{i-1},\lambda^{i-1} )\vert_t \Delta^i_{x,h_j}(t)\mathrm{d}t\\
&\hspace{-5mm}=\int_{t_0+h_j}^TH_{x_j}(\cdot,\mathrm{X}_{h}^{i-1},u^{i-1},\lambda^{i-1})\vert_t,\Delta^i_{x,h_0}(t-h_j)\mathrm{d}t\\
&\hspace{-5mm}=\int_{t_0}^{T}H_{x_j}(\cdot,\mathrm{X}_{h}^{i-1},u^{i-1},\lambda^{i-1} )\rvert_{t+h_j}\Delta^i_{x,h_0} (t)\mathrm{d}t,
\end{array}
\end{equation*}
where in the last integral we use the fact that $H_{x_j}\equiv 0$ if evaluated at $t>T$; see \eqref{eq:HamDef}. Recalling \textbf{Step 1} and \eqref{eq:Costate}, we see that $\sum_{j=0}^kH_{x_j}(\cdot,\mathrm{X}_{h}^{i-1},u^{i-1},\lambda^{i-1} )\rvert_{t+h_j}$ is $-\dot{\lambda}^{i-1}(t)$ transposed a.e. Thus, integration of \eqref{eq:Tay2} shows that 
\begin{equation*}
\begin{array}{lll}
\mathcal{J}_1+\mathcal{J}_3&\leq M_7 \|\mu \|_{L^1([t_0,T])}\| \Delta^i_{x,h_0}\|^2_{L^{\infty}([t_0,T])}\\
&-\int_{t_0}^{T}\frac{\mathrm{d}}{\mathrm{d}t}((\lambda^{i-1})^{\top}\Delta^i_{x,h_0})\vert_t\mathrm{d}t\\
% &=M_1(k+1) \|\mu \|_{L^1[t_0,T]}\| \Delta_{x,0}\|_{L^{\infty}[t_0,T]}\\
% &-(\lambda^{i-1})^{\top}\Delta_{x,0}\vert^{T}_{t_0}\\
&=M_7 \|\mu \|_{L^1([t_0,T])}\| \Delta^i_{x,h_0}\|^2_{L^{\infty}([t_0,T])},
\end{array}
\end{equation*}
where the last equality follows because $\Delta_{x,h_0}^i(0)=0$, since $\phi$ is fixed, and $\lambda^{i-1}(T)=0$, by \eqref{eq:Costate}. Overall, we have 
\begin{equation}\label{eq:IntermBound}
\begin{array}{lll}
&\hspace{-8mm} J(u^i)-J(u^{i-1}) \leq -\frac{1}{2}(4\varepsilon_{min}^i+r_{min})\|\Delta^i_u \|_{L^2([t_0,T])}^2\\
&\hspace{16mm}+M_7 \|\mu \|_{L^1([t_0,T])}\| \Delta^i_{x,h_0}\|^2_{L^{\infty}([t_0,T])}.
\end{array}
\end{equation}
We proceed to estimate $\| \Delta^i_{x,h_0}\|^2_{L^{\infty}([t_0,T])}$. In view of \eqref{eq:Dynamics},  
\begin{equation}\label{eq:Deltax}
\begin{array}{lll}
&\hspace{-4mm}\|\Delta^i_{x,h_0}(t) \|\leq \int_{t_0}^t\|f(\cdot,\mathrm{X}_{h}^i,u^i )-f(\cdot,\mathrm{X}_{h}^{i-1},u^{i} ) \|\vert_s\mathrm{d}s\\
&\hspace{-4mm}+\int_{t_0}^t\|f(\cdot,\mathrm{X}_{h}^{i-1},u^i ) - f(\cdot, \mathrm{X}_{h}^{i-1},u^{i-1} ) \|\vert_s\mathrm{d}s =:\mathcal{J}_4+\mathcal{J}_5
\end{array}
\end{equation}
We treat $\mathcal{J}_4$ and $\mathcal{J}_5$ separately. For $\mathcal{J}_5$, by \eqref{eq:CarCond} we obtain
\begin{equation}\label{eq:Deltax1}
\begin{array}{lll}
& \hspace{-6mm} \|f(\cdot,\mathrm{X}_{h}^{i-1},u^i ) - f(\cdot, \mathrm{X}_{h}^{i-1},u^{i-1} ) \|\vert_s\\
& \hspace{-6mm} \leq(\int_0^1 \|f_u(\cdot,\mathrm{X}_{h}^{i-1},ru^i+(1-r)u^{i-1} ) \|\vert_s\mathrm{d}r )\|\Delta_u^i(s)\|\\
& \hspace{-6mm} \leq M_8\mu(s)\|\Delta_u^i(s) \|,
\end{array}
\end{equation}
for some $M_8>0$. Integrating \eqref{eq:Deltax1}, we obtain
\begin{equation}\label{eq:Deltax2}
\begin{array}{lll}
\mathcal{J}_5\leq M_8 \int_{t_0}^T\mu(s)\|\Delta_u^i(s) \| \mathrm{d}s.
\end{array}
\end{equation}
By similar arguments, we obtain 
\begin{equation}\label{eq:Deltax3}
\begin{array}{lll}
\mathcal{J}_4 \leq M_8\int_{t_0}^t\mu(s)(\sum_{j=0}^k \|\Delta^i_{x,h_0}(s-h_j) \|)\mathrm{d}s.
\end{array}
\end{equation}
For $j\in \{ 0\}\cup[k]$, since $\Delta^i_{x,h_0}(t-h_j )=0$ for $t\in[t_0,t_0+h_j]$,
\begin{equation}\label{eq:Deltax4}
\begin{array}{lll}
&\int_{t_0}^t\mu(s)\|\Delta^i_{x,h_0}(s-h_j) \|\mathrm{d}s \\
% &\hspace{5mm}= \int_{t_0}^{t-h_j}\mu(s+h_j)\|\Delta^i_{x,h_0}(s) \|\mathrm{d}s\\
&\hspace{5mm}\leq \int_{t_0}^t\mu(s+h_j )\mathds{1}_{[0,T]}(s+h_j)\|\Delta^i_{x,h_0}(s) \|\mathrm{d}s.
\end{array}
\end{equation}
For $t\in [t_0,T]$, denote $\mathcal{M}(t) = \sum_{j=0}^k\mu(t+h_j)\mathds{1}_{[0,T]}(t+h_j)$. Since $\mathcal{M}\in L^2([t_0,T])$, from \eqref{eq:Deltax2}-\eqref{eq:Deltax4} we have 
\begin{equation*}
\begin{array}{lll}
\|\Delta^i_{x,h_0}(t) \|&\leq M_8 \int_{t_0}^T\mu(s)\|\Delta^i_u(s)\| \mathrm{d}s\\
&+M_8\int_{t_0}^t\mathcal{M}(s)\|\Delta^i_{x,h_0}(s) \| \mathrm{d}s,\ t\in[t_0,T].
\end{array}
\end{equation*}
Applying the Grönwall-Bellman and Hölder inequalities, we conclude that for some $M_9>0$
\begin{equation}\label{eq:Deltax5}
\begin{array}{lll}
\|\Delta^i_{x,h_0}(t) \|^2&\leq  M_9(\int_{t_0}^T\mu(s)\|\Delta_u^i(s)\| \mathrm{d}s )^2\\
&\leq M_9 \|\mu \|_{L^2([t_0,T])}^2\|\Delta^i_u \|^2_{L^2([t_0,T])}.
\end{array}
\end{equation}
From \eqref{eq:IntermBound} and \eqref{eq:Deltax5}, we obtain \eqref{eq:JDecrease}. Choosing $\varepsilon_i^0>0$ such that $\xi_0>0$, since $\{\varepsilon_{min}^i \}_{i\in \mathbb{N}_0}$ is non decreasing, $\{J(u^i) \}_{i=0}^{\infty}$ is decreasing and hence, by Corollary~\ref{cor:CostBounded}, convergent.
\end{proof}

ESSA terminates in a finite (quantifiable) number of steps.
\begin{corollary}\label{Cor:Termin}
Choose $\varepsilon^0_{min}>0$ such that $\xi_0>0$ in \eqref{eq:JDecrease}. Then $\lim_{i\to \infty}\|\Delta^i_u \|_{L^2([t_0,T])}=0$. Moreover, given a lower bound $J_*$ on $J(u)$, $u\in \mathcal{U}$, then the ESSA must terminate in at most $\lfloor( \xi_0\eta_{\text{tol}})^{-1}(J(u^0)- J_* )\rfloor$ steps.
\end{corollary}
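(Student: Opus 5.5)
The plan is to telescope the descent inequality \eqref{eq:JDecrease} of Proposition~\ref{Prop:JDecrease}. Since the diagonal entries of $C^i$ are only ever increased, $\{\varepsilon_{min}^i\}$ is non-decreasing. Hence $\xi_i = 2\varepsilon_{min}^i+\frac{1}{2}r_{min}-M_5 \geq \xi_0>0$ for every $i$. Then \eqref{eq:JDecrease} gives, for every accepted iterate,
\begin{equation*}
\xi_0\|\Delta_u^i\|_{L^2([t_0,T])}^2 \leq \xi_i\|\Delta_u^i\|_{L^2([t_0,T])}^2 \leq J(u^{i-1})-J(u^i).
\end{equation*}
Summing over $i=1,\dots,N$ telescopes the right-hand side:
\begin{equation*}
\xi_0\sum_{i=1}^N\|\Delta_u^i\|_{L^2([t_0,T])}^2 \leq J(u^0)-J(u^N) \leq J(u^0)-J_*.
\end{equation*}
A lower bound $J_*$ exists by Corollary~\ref{cor:CostBounded}, e.g. $J_*=-\|\mu\|_{L^1([t_0,T])}(1+\sqrt{k+1}M_1)$. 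The series $\sum_i\|\Delta_u^i\|^2_{L^2}$ therefore converges, its terms tend to zero, and so $\lim_{i\to\infty}\|\Delta_u^i\|_{L^2([t_0,T])}=0$.

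For the termination count, suppose the algorithm has not stopped by step $N$. Then the stopping test in Step 3 failed at every step $i\le N$, i.e. $\|\Delta_u^i\|^2_{L^2}>\eta_{\text{tol}}$ for $i=1,\dots,N$. The telescoped bound then gives $N\xi_0\eta_{\text{tol}}< J(u^0)-J_*$, so $N<(\xi_0\eta_{\text{tol}})^{-1}(J(u^0)-J_*)$. Consequently the algorithm stops after at most $\lfloor(\xi_0\eta_{\text{tol}})^{-1}(J(u^0)-J_*)\rfloor$ accepted steps; the small adjustment needed to get the floor rather than a strict bound is an index-counting detail.

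The main obstacle is the interplay with the inner loop of Step 3, where $C^i$ is increased whenever $J(u^i)\ge J(u^{i-1})$. I would argue as follows:
\begin{itemize}
\item Once $\varepsilon^0_{min}$ is chosen so that $\xi_0>0$, Proposition~\ref{Prop:JDecrease} shows $J(u^i)\le J(u^{i-1})$ automatically.
\item Equality $J(u^i)=J(u^{i-1})$ forces $\|\Delta_u^i\|_{L^2}=0\le\eta_{\text{tol}}$, so the stopping test fires before the increase branch is reached.
\item Hence the inner loop is never entered, and every outer step is counted in the telescoping sum.
\end{itemize}

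I would also check that Proposition~\ref{Prop:JDecrease} applies at each iteration with the same constant $M_5$. This holds because $M_5$ depends only on the uniform bounds $M_1$, $M_2$ and on $\mu$ from Propositions~\ref{Prop:WPx} and~\ref{Prop:LambdBound}, and not on $i$.
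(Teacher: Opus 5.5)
Your proof is correct and is essentially the paper's argument. Both sum \eqref{eq:JDecrease} using $\xi_i\ge\xi_0>0$ and the lower bound on $J$ from Corollary~\ref{cor:CostBounded}. Your direct summability argument is the contrapositive of the paper's subsequence contradiction, and your count of at most $\lfloor(\xi_0\eta_{\text{tol}})^{-1}(J(u^0)-J_*)\rfloor$ non-stopping steps coincides with the paper's contradiction at step $\lfloor(\xi_0\eta_{\text{tol}})^{-1}(J(u^0)-J_*)\rfloor+1$. You also check explicitly that the $C^i$-increase branch of Step~3 is never triggered once $\xi_0>0$, since a failed stopping test forces $J(u^i)-J(u^{i-1})\le-\xi_0\eta_{\text{tol}}<0$; the paper leaves this point implicit.
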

\begin{proof}
If it were $\|u^{i_k}-u^{i_k-1} \|\geq \kappa>0$ for some subsequence $\{i_k\}$, then summing \eqref{eq:JDecrease} and noting that $\{\xi_i\}_{i=0}^{\infty}\subseteq \mathbb{R}_{>0}$ is non-decreasing would yield
\begin{equation}\label{eq:Boundsterm}
J(u^n)\leq J(u^0)-\xi_0\sum_{i=1}^n\|\Delta^i_u \|^2_{L^2[t_0,T]},
\end{equation}
which tends to $-\infty$ as $n\to \infty$, leading to a contradiction. Moreover, if the algorithm did not terminate within the first $n'=\lfloor( \xi_0\eta_{\text{tol}})^{-1}(J( u^0)- J_* )\rfloor+1$ steps, substituting $\|\Delta^i_u \|^2_{L^2[t_0,T]}>\eta_{\text{tol}}$, $i\in [n']$, into \eqref{eq:Boundsterm} would yield $J_*\leq J(u^{n'} )<J(u^0)-\xi_0n'\eta_{\text{tol}}\leq J( u^0)-(J( u^0)-J_* )=J_*$, leading again to a contradiction.
\end{proof}

We can offer a stronger bound on the cost evolution for the considered \textbf{Implementation}.
\begin{corollary}
Under the assumptions of Corollary~\ref{Cor:Disc}, if $\varepsilon^0_{min}$ is large enough,
$J(u^i)-J(u^{i-1})\leq -\xi_i\mathfrak{d}_*\|\Delta^i_u\|_{L^\infty([t_0,T])}^2$.
\end{corollary}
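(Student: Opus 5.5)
Combine Proposition~\ref{Prop:JDecrease} with the piecewise-constant structure of the controls produced by the \textbf{Implementation}, as in the proof of Corollary~\ref{Cor:Disc}. The estimate \eqref{eq:JDecrease} gives $J(u^i)-J(u^{i-1})\leq -\xi_i\|\Delta_u^i\|^2_{L^2([t_0,T])}$. It therefore suffices to show that
\[
\|\Delta_u^i\|^2_{L^2([t_0,T])}\geq \mathfrak{d}_*\|\Delta_u^i\|^2_{L^\infty([t_0,T])}
\]
and that $\xi_i\geq 0$, so that multiplying the lower bound by $-\xi_i$ preserves the inequality in the right direction.

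For the sign, take $\varepsilon^0_{min}$ large enough that $\xi_0=2\varepsilon^0_{min}+\frac12 r_{min}-M_5>0$; this is exactly the ``large enough'' condition in the statement. The diagonal entries of $C^i$ are only ever increased in \textbf{Step 3}, so $\{\varepsilon^i_{min}\}$ is non-decreasing. Since $M_5$ does not depend on $i$, it follows that $\xi_i\geq\xi_0>0$ for all $i$.

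For the norm comparison, both $u^i$ and $u^{i-1}$ are constant on each cell $[\rho_{j-1},\rho_j)$ of the grid $\mathcal{G}$, and hence so is $\Delta_u^i$. Pick $j\in[N]$ at which $\|\Delta_u^i\|$ attains its maximum, so that $\|\Delta_u^i\|_{L^\infty([t_0,T])}=\|\Delta_u^i(\rho_{j-1})\|$. Then
\[
\|\Delta_u^i\|^2_{L^2([t_0,T])}\geq\int_{\rho_{j-1}}^{\rho_j}\|\Delta_u^i(t)\|^2\,\mathrm{d}t=(\rho_j-\rho_{j-1})\,\|\Delta_u^i\|^2_{L^\infty([t_0,T])}\geq \mathfrak{d}_*\|\Delta_u^i\|^2_{L^\infty([t_0,T])},
\]
where the last step uses the assumption $\mathfrak{d}_{\mathcal{G}}\geq\mathfrak{d}_*$ of Corollary~\ref{Cor:Disc}.

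Chaining the two displays with $\xi_i>0$ gives the claimed bound. The only point requiring care is that Proposition~\ref{Prop:JDecrease} applies to the computed iterates. Its proof uses only that $u^i,u^{i-1}\in\mathcal{U}$, that \textbf{Step 2} minimizes $K$ pointwise, and the well-posedness results of Propositions~\ref{Prop:WPx} and~\ref{Prop:LambdBound}. All of these hold for piecewise-constant measurable controls with values in $U$, so I expect no genuine obstacle, provided the minimization in \textbf{Step 2} is understood as exact at the grid nodes.
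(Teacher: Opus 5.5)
Your proof is correct and is essentially the paper's own argument. You combine \eqref{eq:JDecrease} with the cellwise comparison $\|\Delta_u^i\|_{L^2([t_0,T])}^2\geq\mathfrak{d}_{\mathcal{G}}\|\Delta_u^i\|_{L^\infty([t_0,T])}^2\geq\mathfrak{d}_*\|\Delta_u^i\|_{L^\infty([t_0,T])}^2$ from the proof of Corollary~\ref{Cor:Disc}, and your check $\xi_i\geq\xi_0>0$ makes explicit the sign condition the paper leaves implicit in ``$\varepsilon^0_{min}$ large enough''.

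One small correction to your closing caveat. Exact minimisation only at the grid nodes does not by itself give the a.e.\ inequality $K_u\Delta_u^i\leq 0$ used for $\mathcal{J}_2$ in Proposition~\ref{Prop:JDecrease}, because $\mathrm{X}^i_h(t)$ and $\lambda^{i-1}(t)$ vary within each cell. The paper makes the same tacit assumption when it applies \eqref{eq:JDecrease} to the implemented iterates, so this is not a gap relative to its proof.
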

% \begin{proof}
% Follows from \eqref{eq:JDecrease} and arguments of Corollary~\ref{Cor:Disc}.
% % since,  as shown in the proof of Corollary~\ref{Cor:Disc}, $\|u^{i}-u^{i-1} \|_{L^{2}([t_0,T] )}^2 \geq \mathfrak{d}_{\mathcal{G}} \|u^{i}-u^{i-1} \|_{L^{\infty}([t_0,T] )}^2 \geq \mathfrak{d}_* \|u^{i}-u^{i-1} \|_{L^{\infty}([t_0,T] )}^2$.
% \end{proof}

We can now prove asymptotic first-order optimality.
\begin{proposition}\label{Prop:AsympFirstOrd}
Assume $C^i\preceq \gamma I$, for some $\gamma>0$, for all the iterations of the ESSA. Then, the sequence $\{u^i \}_{i=0}^{\infty}$ satisfies asymptotically first order optimality conditions, i.e.
\begin{equation*}
\begin{array}{lll}
\lim_{i\to \infty}\| u^i-P_U(u^i-H_u(\cdot,\mathrm{X}^i_{h},u^i,\lambda^{i} ) )\|_{L^2([t_0,T])}=0,
\end{array}
\end{equation*}
for the projection operator $P_U(q) = \operatorname{argmin}_{w\in U}\|q-w \|$.
\end{proposition}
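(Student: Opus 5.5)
We will use the variational inequality characterising the minimiser in Step 2, and then transfer it from the "lagged" quantities $(\mathrm{X}^i_h,\lambda^{i-1})$ to the "current" ones $(\mathrm{X}^i_h,\lambda^i)$. The argument rests on Corollary~\ref{Cor:Termin}, which gives $\|\Delta_u^i\|_{L^2([t_0,T])}\to 0$, and on the stability estimates already derived in the proof of Proposition~\ref{Prop:JDecrease}.

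\emph{Step 1 (fixed-point characterisation).} Since $K(t,\mathrm{X}^i_h(t),\cdot,\lambda^{i-1}(t);u^{i-1}(t),C^i)$ is convex and $U$ is convex, the minimiser $u^i(t)$ satisfies, for a.e. $t$ and all $w\in U$,
\[
(H_u(\cdot,\mathrm{X}^i_h,u^i,\lambda^{i-1})\vert_t^{\top}+2C^i\Delta_u^i(t))^{\top}(w-u^i(t))\geq 0 .
\]
This is exactly the statement $u^i=P_U(u^i-H_u(\cdot,\mathrm{X}^i_h,u^i,\lambda^{i-1})-2C^i\Delta_u^i)$ a.e. Since $P_U$ is $1$-Lipschitz and $C^i\preceq\gamma I$, we obtain pointwise
\[
\|u^i-P_U(u^i-H_u(\cdot,\mathrm{X}^i_h,u^i,\lambda^{i}))\|\leq 2\gamma\|\Delta_u^i\|+\|H_u(\cdot,\mathrm{X}^i_h,u^i,\lambda^{i-1})-H_u(\cdot,\mathrm{X}^i_h,u^i,\lambda^{i})\| .
\]
The $L^2$ norm of the first term tends to zero by Corollary~\ref{Cor:Termin}.

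\emph{Step 2 (co-state difference).} $H_u$ is affine in $\lambda$ with coefficient $f_u$, so the second term is at most $\|f_u(\cdot,\mathrm{X}^i_h,u^i)\|\,\|\lambda^i-\lambda^{i-1}\|\leq M\mu(t)\|\lambda^i-\lambda^{i-1}\|(t)$, using \eqref{eq:CarCond} and Proposition~\ref{Prop:WPx}. Because $\mu\in L^2$, it suffices to show $\|\lambda^i-\lambda^{i-1}\|_{L^\infty([t_0,T])}\to 0$. To do this, write the advanced DDE \eqref{eq:Costate} for $\Delta_\lambda:=\lambda^i-\lambda^{i-1}$ in integral form backward from $T$, where $\Delta_\lambda(T)=0$. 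Split the difference of the right-hand sides $\sum_j H_{x_j}\vert_{s+h_j}$ into three parts: a part linear in $\Delta_\lambda(s+h_j)$, bounded by $\mu(s+h_j)\|\Delta_\lambda(s+h_j)\|$ since $H_{x_j}$ is affine in $\lambda$; a part from the change in $\mathrm{X}_h$, bounded via the mixed second-derivative estimate \eqref{eq:DHBound} by $\mu(s+h_j)(1+M_2)\sum_l\|\Delta^i_{x,h_l}(s+h_j)\|$; and a part from the change in $u$, bounded likewise by $\mu(s+h_j)(1+M_2)\|\Delta^i_u(s+h_j)\|$. Then change variables exactly as in the treatment of $\mathcal{J}_4$ in Proposition~\ref{Prop:JDecrease}, using the function $\mathcal{M}$ (now with the shifts run backward), and apply a backward Grönwall–Bellman inequality. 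Together with \eqref{eq:Deltax5}, this yields
\[
\|\Delta_\lambda\|_{L^\infty}\leq M'\big(\|\mu\|_{L^2}\|\Delta_u^i\|_{L^2}+\|\mu\|_{L^1}\|\Delta^i_{x,h_0}\|_{L^\infty}\big)\leq M''\|\Delta_u^i\|_{L^2}\to0 .
\]
The $u$-term is handled by Cauchy–Schwarz.

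\emph{Main obstacle.} The delicate point is the Grönwall step for the advanced-type co-state equation. The right-hand side involves $\Delta_\lambda$ at the shifted times $s+h_j\geq s$, so the standard inequality must be applied to the backward running supremum $\sup_{\tau\in[t,T]}\|\Delta_\lambda(\tau)\|$ rather than to $\Delta_\lambda$ itself. This also requires keeping the truncation $\mathds{1}_{(-\infty,T]}$ consistent, so that terms evaluated beyond $T$ vanish. If the direct pointwise Grönwall argument becomes messy, a fallback is to repeat it interval by interval using the backward step partition $\{s_j\}$ from Proposition~\ref{Prop:LambdBound}; on each interval the shifted terms are already controlled from the previous step. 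Combining Steps 1 and 2 then gives the claim.
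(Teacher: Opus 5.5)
Your proof is correct and is essentially the paper's argument. It uses the same projection fixed-point identity from Step~2 with the $1$-Lipschitz property of $P_U$ and $C^i\preceq\gamma I$, the same reduction to $\|\lambda^i-\lambda^{i-1}\|_{L^\infty([t_0,T])}\leq M\|\Delta_u^i\|_{L^2([t_0,T])}$ via a backward Gr\"onwall estimate fed by \eqref{eq:Deltax5}, and Corollary~\ref{Cor:Termin} to conclude. Two small remarks: you are more careful than the paper in keeping the $u^i$ versus $u^{i-1}$ term in the co-state difference (the paper's estimate drops it, harmlessly, as your Cauchy--Schwarz bound shows), and your \emph{main obstacle} is milder than you fear, since substituting $\sigma=s+h_j$ gives $\int_{t+h_j}^{T}\mu(\sigma)\|\Delta_\lambda(\sigma)\|\,\mathrm{d}\sigma\leq\int_{t}^{T}\mu(\sigma)\|\Delta_\lambda(\sigma)\|\,\mathrm{d}\sigma$, so a plain backward Gr\"onwall inequality works on all of $[t_0,T]$ at once, with no running supremum and no interval-by-interval step method.
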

\begin{proof}
The necessary condition for optimality implied by \eqref{eq:PMP}  is that 
$-H_u(\cdot,\mathrm{X}^*_h,u^*,\lambda^*)\vert_t(v-u^*(t)) \leq 0$, for all $v\in U$,
a.e. in $[t_0,T]$. Since $z = P_U(q)$ iff $(q-z)^{\top}(\varpi-z )\leq 0$ for all $\varpi\in U$, this is equivalent to $u^*(t)=P_U(u^*(t)-H_u(\cdot,\mathrm{X}^*_{h},u^*,\lambda^{*} )\vert_t )$. Denote $\zeta^i = P_U(u^i-H_u(\cdot,\mathrm{X}^i_{h},u^i,\lambda^{i} ) ).$
% \begin{equation*}
% \begin{array}{lll}
% \zeta^i = P_U(u^i-H_u(\cdot,\mathrm{X}^i_{h},u^i,\lambda^{i} ) ).
% \end{array}
% \end{equation*}

By Assumption~\ref{Assump:Huu} and \eqref{eq:AugHam}, a.e. in $[t_0,T]$, the function $u\mapsto K(\mathrm{X}^i_{h},u,\lambda^{i-1};u^{i-1},C^i )$ is strictly convex. The minimisation in \textbf{Step 2} of the ESSA implies that, a.e. in $[t_0,T]$,
\begin{equation}\label{eq:uioptae}
\hspace{-2mm} u^{i} =  P_U(u^i-H_u(\cdot,\mathrm{X}^i_{h},u^i,\lambda^{i-1} )-2C^i(u^i-u^{i-1} ) ).
\end{equation}
Therefore, using the fact that $w\mapsto P_U(w)$ is $1$-Lipschitz,
\begin{equation*}
\begin{array}{lll}
&\|u^i-\zeta^i \|_{L^2([t_0,T])}\leq 2\gamma \|\Delta_u^i\|_{L^2([t_0,T])}\\
&\hspace{3mm}+\|H_u(\cdot,\mathrm{X}^i_{h},u^i,\lambda^{i-1} )-H_u(\cdot,\mathrm{X}^i_{h},u^i,\lambda^{i} ) \|_{L^2([t_0,T])}.
\end{array}
\end{equation*}
By Proposition~\ref{Prop:HuCaratheodory},
there exists $M_{10}>0$ such that, for any $t$,
\begin{equation}\label{eq:Hdiff0}
\begin{array}{ll}
&\|H_u(\cdot,\mathrm{X}^i_{h},u^i,\lambda^{i-1} )-H_u(\cdot,\mathrm{X}^i_{h},u^i,\lambda^{i} )\|\vert_t\\
&\hspace{22mm}\leq M_{10}\mu(t)\|\lambda^i(t)-\lambda^{i-1}(t) \|,
\end{array}
\end{equation}
as can be shown by arguments similar to those in the proof of Proposition~\ref{Prop:Regulariz}, whence 
\begin{equation*}
\begin{array}{lll}
&\|H_u(\cdot,\mathrm{X}^i_{h},u^i,\lambda^{i-1} )-H_u(\cdot,\mathrm{X}^i_{h},u^i,\lambda^{i} ) \|_{L^2([t_0,T])} \\
&\hspace{20mm}\leq M_{10}\|\mu \|_{L^2([t_0,T])} \|\lambda^{i}-\lambda^{i-1} \|_{L^{\infty}([t_0,T])}.
\end{array}
\end{equation*}
By arguments akin to those leading to \eqref{eq:Deltax5}, for some $M_{11}>0$, 
\begin{equation}\label{eq:LambFinEst}
\begin{array}{lll}
\|\lambda^{i}-\lambda^{i-1} \|_{L^{\infty}([t_0,T])}\leq M_{11}\|\Delta_u^i \|_{L^2([t_0,T])}.
\end{array}
\end{equation}
In fact, consider $t\in[T-h_1,T]$. By \eqref{eq:Costate}, we have 
\begin{equation}\label{eq:LambdaDiff}
\begin{array}{lll}
&\|\lambda^i(t)-\lambda^{i-1}(t) \|\\
&\leq \int_t^T\|H_{x_0}(\cdot,\mathrm{X}_{h}^{i-1},\lambda^{i-1} ) -H_{x_0}(\cdot,\mathrm{X}_{h}^i,\lambda^i )\|\vert_s\mathrm{d}s.
\end{array}
\end{equation}
Considering the integrand, we have, for some $M_{12}>0$,
\begin{equation}\label{eq:LambdaDiff1}
\begin{array}{lll}
&\hspace{-7mm}\|H_{x_0}(\cdot,\mathrm{X}_{h}^{i-1},\lambda^{i-1} ) -H_{x_0}(\cdot,\mathrm{X}_{h}^i,\lambda^i )\|\vert_s\\
&\hspace{-7mm} \leq M_{12}\mu(s)(\|\mathrm{X}_{h}^i(s)-\mathrm{X}_{h}^{i-1}(s) \| +\|\lambda^i(s)-\lambda^{i-1}(s) \| ).
\end{array}
\end{equation}
In view of \eqref{eq:Deltax5}, it follows that, for some $M_{13}>0$,
\begin{equation*}
\|\mathrm{X}_{h}^i(s)-\mathrm{X}_{h}^{i-1}(s) \| \leq M_{13} \|\mu \|_{L^2([t_0,T])}\|\Delta_u^i \|_{L^2([t_0,T])},
\end{equation*}
whence \eqref{eq:LambdaDiff} and \eqref{eq:LambdaDiff1} yield, for some $M_{14}>0$,
\begin{equation*}
\begin{array}{lll}
&\|\lambda^i(t)-\lambda^{i-1}(t) \| \leq M_{14}\|\Delta_u^i \|_{L^2[t_0,T]}\\
&\hspace{20mm}+M_{14}\int_t^T\mu(s)\|\lambda^{i}(s)-\lambda^{i-1}(s) \|\mathrm{d}s.
\end{array}
\end{equation*}
Employing the Grönwall-Bellman inequality then gives \eqref{eq:LambFinEst}, restricted to the interval $[T-h_1,T]$. Employing the step method backwards in time, as in the proof of Proposition~\ref{Prop:LambdBound}, then yields \eqref{eq:LambFinEst} on $[t_0,T]$.
Now, since $\lim_{t \to \infty}\|\Delta_u^i \|_{L^2([t_0,T])}=0$ by Corollary~\ref{Cor:Termin}, also $\lim_{t \to \infty}\|u^{i}-\zeta^i \|_{L^2([t_0,T])}=0$, which yields the result.
\end{proof}

If the ESSA control sequence converges to $\bar u$, then $\bar u$ can be shown to satisfy first-order optimality. 
\begin{corollary}\label{Cor:PointwiseConv}
Under the conditions of Proposition~\ref{Prop:AsympFirstOrd}, let $\{u^i \}_{i\in \mathbb{N}_0}$ converge to $\bar{u}\in \mathcal{U}$ pointwise a.e. and denote by $\bar{x}$ and $\bar{\lambda}$ the state and co-state corresponding to $\bar{u}$. Then, 
\begin{equation*}
\| \bar{u}-P_U(\bar{u}-H_u(\cdot,\bar{\mathrm{X}}_{h},\bar{u},\bar{\lambda} ) )\|_{L^2[t_0,T]}=0,
\end{equation*}
i.e., $\bar{u}$ satisfies the first-order optimality condition.
\end{corollary}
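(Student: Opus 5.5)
The plan is to pass to the limit in the quantity $u^i-\zeta^i$, with $\zeta^i = P_U(u^i-H_u(\cdot,\mathrm{X}^i_{h},u^i,\lambda^{i}))$, which by Proposition~\ref{Prop:AsympFirstOrd} tends to zero in $L^2([t_0,T])$. Set $\bar\zeta = P_U(\bar u-H_u(\cdot,\bar{\mathrm{X}}_{h},\bar u,\bar\lambda))$. By the triangle inequality,
\begin{equation*}
\|\bar u-\bar\zeta\|_{L^2} \leq \|\bar u-u^i\|_{L^2}+\|u^i-\zeta^i\|_{L^2}+\|\zeta^i-\bar\zeta\|_{L^2}.
\end{equation*}
It therefore suffices to show that the first and third terms vanish as $i\to\infty$.

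\textbf{First term.} Since $u^i\to\bar u$ pointwise a.e. and $U$ is compact, we have $\|u^i-\bar u\|^2\le (\mathrm{diam}\,U)^2$, which is integrable on $[t_0,T]$. Dominated convergence then gives $\|u^i-\bar u\|_{L^2}\to 0$.

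\textbf{Third term.} Because $P_U$ is $1$-Lipschitz, it suffices to control $\|H_u(\cdot,\mathrm{X}^i_{h},u^i,\lambda^i)-H_u(\cdot,\bar{\mathrm{X}}_{h},\bar u,\bar\lambda)\|_{L^2}$. The key step is to compare the pair $(u^i,x^i,\lambda^i)$ with $(\bar u,\bar x,\bar\lambda)$ rather than with consecutive iterates. The estimate \eqref{eq:Deltax5} in the proof of Proposition~\ref{Prop:JDecrease} used nothing specific to consecutive iterates: it holds for any two controls in $\mathcal{U}$ with the same initial condition $\phi$. Applying it gives $\|x^i-\bar x\|_{L^\infty}^2\le M_9\|\mu\|^2_{L^2}\|u^i-\bar u\|^2_{L^2}$. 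Likewise, the backward step-method argument leading to \eqref{eq:LambFinEst} gives $\|\lambda^i-\bar\lambda\|_{L^\infty}\le M_{11}\|u^i-\bar u\|_{L^2}$. Here $\lambda^i$ denotes the co-state for $(u^i,x^i)$, as in Proposition~\ref{Prop:AsympFirstOrd}. Both right-hand sides tend to $0$.

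Next I apply the mean value theorem to $H_u$ along the segment joining $(\bar{\mathrm{X}}_h,\bar u,\bar\lambda)$ to $(\mathrm{X}^i_h,u^i,\lambda^i)$. Convexity of $U$ keeps the $u$-component in $U$. Proposition~\ref{Prop:HuCaratheodory} bounds the second derivatives $H_{ux_j},H_{uu},H_{u\lambda}$ by $\mu(t)(1+\|\mathrm{X}\|)(1+\|\lambda\|)$, and the arguments stay in the balls fixed by Propositions~\ref{Prop:WPx} and \ref{Prop:LambdBound}. This yields, pointwise a.e.,
\begin{equation*}
\|H_u(\cdot,\mathrm{X}^i_{h},u^i,\lambda^i)-H_u(\cdot,\bar{\mathrm{X}}_{h},\bar u,\bar\lambda)\|\vert_t\le M\mu(t)\big(\|\mathrm{X}^i_h(t)-\bar{\mathrm{X}}_h(t)\|+\|u^i(t)-\bar u(t)\|+\|\lambda^i(t)-\bar\lambda(t)\|\big).
\end{equation*}
The $\mathrm{X}$ and $\lambda$ contributions are bounded in $L^2$ by $M\|\mu\|_{L^2}$ times the corresponding $L^\infty$ norms. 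The delayed components of $\mathrm{X}^i_h$ are handled by the fixed prehistory $\phi$, exactly as in the proof of Proposition~\ref{Prop:JDecrease}, so these contributions tend to zero.

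\textbf{Main obstacle.} The delicate part is the term $\mu(t)\|u^i(t)-\bar u(t)\|$. Here $\mu$ is only $L^2$, so an $L^2$ estimate on $u^i-\bar u$ is not sufficient on its own. I would instead use dominated convergence once more. The function $\mu^2\|u^i-\bar u\|^2$ is dominated by $(\mathrm{diam}\,U)^2\mu^2\in L^1$ and tends to $0$ a.e. Hence $\|\mu(u^i-\bar u)\|_{L^2}\to 0$.

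Combining the three estimates, $\|\bar u-\bar\zeta\|_{L^2}\le\lim_i(\cdots)=0$, which is the claim.
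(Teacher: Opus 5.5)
Your proof is correct and follows the same route as the paper. Dominated convergence gives $u^i\to\bar u$ in $L^2$. The estimates \eqref{eq:Deltax5} and \eqref{eq:LambFinEst}, applied to the pair $u^i,\bar u$, give uniform convergence of $x^i$ and $\lambda^i$. A Lipschitz-type bound on $H_u$, together with Proposition~\ref{Prop:AsympFirstOrd} and the $1$-Lipschitz projection, then concludes. Your explicit dominated-convergence treatment of the term $\mu(t)\|u^i(t)-\bar u(t)\|$ is a necessary detail, since $\mu$ is only in $L^2$; the paper leaves it implicit in ``arguments akin to \eqref{eq:Hdiff0}--\eqref{eq:LambdaDiff1}''.
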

\begin{proof}
By compactness of $U$ and the dominated convergence theorem, $\|u^i-\bar{u} \|_{L^2([t_0,T])} \to 0$, whence also $\|x^i-\bar{x} \|_{L^{\infty}([t_0,T])} \to 0$ and $\|\lambda^{i}-\bar{\lambda} \|_{L^{\infty}([t_0,T])} \to 0$, in view of \eqref{eq:Deltax5} and \eqref{eq:LambFinEst}. By Proposition~\ref{Prop:AsympFirstOrd}, it suffices to show 
\begin{equation*}
\lim_{i\to \infty}\|H_u(\cdot,\mathrm{X}^i_{h},u^i,\lambda^{i} )-H_u(\cdot,\bar{\mathrm{X}}_{h},\bar{u},\bar{\lambda} ) \|_{L^2[t_0,T]}=0,
\end{equation*}
which follows by arguments akin to \eqref{eq:Hdiff0}--\eqref{eq:LambdaDiff1}. %in the proof of Proposition~\ref{Prop:AsympFirstOrd}.
\end{proof}
% Recalling the \textbf{Implementation}, the assumptions of Corollary~\ref{Cor:PointwiseConv} can be weakened, at the cost of considering a subsequence of $\{u^i \}_{i=0}^{\infty}$.

For the proposed \textbf{Implementation}, convergence of a subsequence to some $\bar u$ is guaranteed.
\begin{corollary}\label{Cor:PointwiseGrid}
Consider the proposed numerical \textbf{Implementation} with an arbitrary grid $\mathcal{G}$. Under the conditions of Proposition~\ref{Prop:AsympFirstOrd}, there exists a subsequence $\{u^{i_k} \}_{k=1}^{\infty}$ which converges pointwise a.e. to a piecewise constant $\bar{u}\in \mathcal{U}$, satisfying the first-order optimality conditions.
\end{corollary}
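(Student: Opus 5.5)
The plan is to reduce the claim to a compactness argument in a finite-dimensional space and then invoke Corollary~\ref{Cor:PointwiseConv}. In the \textbf{Implementation}, every iterate $u^i$ is piecewise constant on the fixed grid $\mathcal{G}$, so $u^i\vert_{[\rho_j,\rho_{j+1})}\equiv u^i(\rho_j)$ for $j\in\{0\}\cup[N-1]$. Each $u^i$ is therefore determined by the finite vector $V^i=(u^i(\rho_0),\dots,u^i(\rho_{N-1}))\in U^N$. Since $U$ is compact, $U^N\subset\mathbb{R}^{mN}$ is compact.

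By Bolzano--Weierstrass there is a subsequence $\{i_k\}$ with $V^{i_k}\to\bar V=(\bar v_0,\dots,\bar v_{N-1})\in U^N$. Define $\bar u\vert_{[\rho_j,\rho_{j+1})}\equiv\bar v_j$. Then $\bar u$ is piecewise constant and $U$-valued, hence measurable, so $\bar u\in\mathcal{U}$. Moreover $u^{i_k}(t)\to\bar u(t)$ for every $t\in[t_0,T)$, which gives convergence everywhere and in particular a.e. Because the grid has finitely many cells, this convergence is even uniform: $\|u^{i_k}-\bar u\|_{L^\infty}\to0$.

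It remains to show that $\bar u$ satisfies first-order optimality. Corollary~\ref{Cor:PointwiseConv} is stated for the full sequence, but its proof uses only two ingredients. The first is a.e. convergence of the sequence to $\bar u$. The second is the asymptotic first-order optimality of Proposition~\ref{Prop:AsympFirstOrd}. Proposition~\ref{Prop:AsympFirstOrd} holds along the whole sequence, so it holds along any subsequence. The plan is therefore to rerun that argument with $i$ replaced by $i_k$. Dominated convergence, or here directly uniform convergence, gives $\|u^{i_k}-\bar u\|_{L^2}\to0$. The Grönwall-type estimates \eqref{eq:Deltax5} and \eqref{eq:LambFinEst} are applied with $(u^{i_k},\bar u)$ in place of $(u^i,u^{i-1})$, which gives $\|x^{i_k}-\bar x\|_{L^\infty}\to0$ and $\|\lambda^{i_k}-\bar\lambda\|_{L^\infty}\to0$. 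The Lipschitz bounds from Proposition~\ref{Prop:HuCaratheodory}, as in \eqref{eq:Hdiff0}--\eqref{eq:LambdaDiff1}, then give $H_u(\cdot,\mathrm{X}^{i_k}_h,u^{i_k},\lambda^{i_k})\to H_u(\cdot,\bar{\mathrm{X}}_h,\bar u,\bar\lambda)$ in $L^2$. Finally, the projection $P_U$ is 1-Lipschitz, so $\|\bar u-P_U(\bar u-H_u(\cdot,\bar{\mathrm{X}}_h,\bar u,\bar\lambda))\|_{L^2}$ is bounded by $2\|u^{i_k}-\bar u\|_{L^2}$, plus the $L^2$ difference of the $H_u$ terms, plus the residual of Proposition~\ref{Prop:AsympFirstOrd} along $i_k$. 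All three vanish, so the left-hand side is $0$.

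The main obstacle is checking that the estimates \eqref{eq:Deltax5} and \eqref{eq:LambFinEst} really apply to two arbitrary controls rather than only to consecutive ESSA iterates. I expect this to hold: those derivations used only the state equation, the co-state equation \eqref{eq:Costate}, Assumption~\ref{assum:fandell} and the uniform bounds $M_1,M_2$, and never the Step 2 minimisation. The proof should state this explicitly.
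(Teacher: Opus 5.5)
Your argument is correct. Its second half, rerunning Corollary~\ref{Cor:PointwiseConv} along the subsequence, is what the paper does; the difference lies in how you produce the convergent subsequence.

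The paper works in $L^2([t_0,T])$. Boundedness gives a weakly convergent subsequence $u^{i_k}\rightharpoonup\bar u$, and Banach--Saks plus convexity and compactness of $U$ give $\bar u\in\mathcal{U}$. Testing against sets $A\subseteq\mathcal{I}_j$ of positive measure then shows that each coefficient $\vartheta^{(i_k)}_j$ converges to the average of $\bar u$ over $A$. This forces $\bar u$ to be piecewise constant on the grid and the convergence to be pointwise.

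You apply Bolzano--Weierstrass directly to the coefficient vectors in the compact set $U^N$. This is shorter, and it needs no weak compactness, no Banach--Saks and no convexity of $U$ to place the limit in $\mathcal{U}$. It also makes the (in fact uniform) convergence immediate. The weak-compactness framing only pays off in Proposition~\ref{prop:affine}, where no grid is available.

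You also make explicit a step the paper skips. Corollary~\ref{Cor:PointwiseConv} is stated for the full sequence. Your observation that \eqref{eq:Deltax5} and \eqref{eq:LambFinEst} use only the state and co-state equations, Assumption~\ref{assum:fandell} and the bounds $M_1,M_2$ (not the Step 2 minimisation) is what justifies applying it to a subsequence.

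When you redo \eqref{eq:LambFinEst} for the pair $(u^{i_k},\bar u)$, note that the two co-state equations also differ through the control argument of $H_{x_j}$, which \eqref{eq:LambdaDiff1} suppresses. This adds a term $\int_{t_0}^T\mu(s)\|u^{i_k}(s)-\bar u(s)\|\,\mathrm{d}s\leq\|\mu\|_{L^2([t_0,T])}\|u^{i_k}-\bar u\|_{L^2([t_0,T])}$, which is harmless.
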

\begin{proof}
%Denote the nodes of $\mathcal{G}$ by $\{\rho_j \}_{j=0}^{N}$ and 
Denote $\mathcal{I}_j=[\rho_j,\rho_{j+1})$ for $j\in \{0 \}\cup [N-1]$.
For some $\{\vartheta_j^{(i)} \}\subseteq\mathbb{R}$, we have $u^i = \sum_{j=0}^{N-1}\vartheta^{(i)}_j\mathds{1}_{\mathcal{I}_j}$.
% \begin{equation*}
% u^i = \sum_{j=0}^{N-1}\vartheta^{(i)}_j\mathds{1}_{\mathcal{I}_j}.
% \end{equation*}
Since $\{u^i \}_{i=0}^{\infty}$ is bounded in $L^2([t_0,T] )$, it has a weakly convergent subsequence $\{u^{i_k} \}_{k=1}^{\infty}$ satisfying $u^{i_k}\rightharpoonup\bar{u}$. By convexity and compactness of $U$ and the Banach-Saks theorem \cite{megginson2012introduction}, $\bar{u}\in \mathcal{U}$. For $j\in \{0 \}\cup [N-1]$ and $A\subseteq \mathcal{I}_j$ with $\chi(A)>0$, we obtain $\vartheta^{(i_k)}_j = \chi(A)^{-1}\int_{A}u^{i_k}\mathrm{d}t\to\chi(A)^{-1}\int_{A}\bar{u}\mathrm{d}t$.
% \begin{equation*}
% \vartheta^{(i_k)}_j = \chi(A)^{-1}\int_{A}u^{i_k}\mathds{1}_{A}\mathrm{d}t\to\chi(A)^{-1}\int_{A}\bar{u}\mathds{1}_{A}\mathrm{d}t.
% \end{equation*}
Thus, $\{\vartheta^{i_k}_j \}_{k=1}^{\infty}$ converges to $\vartheta_j^*\in U$ and, for all $A\subseteq \mathcal{I}_j$ with $\chi(A)>0$, $\chi(A)^{-1}\int_{A}\bar{u}\mathrm{d}t = \vartheta_j^*$. Thus, $\bar{u} = \sum_{j=0}^{N-1}\vartheta^{*}_j\mathds{1}_{\mathcal{I}_j}$ and $u^{i_k}\to \bar{u}$ pointwise a.e. The result now follows from Corollary~\ref{Cor:PointwiseConv}. 
\end{proof}

Proposition~\ref{Prop:AsympFirstOrd} can be strengthened to guarantee convergence of a subsequence to a control satisfying first-order optimality, regardless of the implementation, if \eqref{eq:Dynamics} is affine in controls, i.e., $f(t,\mathrm{X},u) = f_0(t,\mathrm{X} )+ G(t,\mathrm{X})u$, where $G(t,\mathrm{X})\in \mathbb{R}^{n\times m}$.
%The assumptions of Proposition~\ref{prop:affine} are ubiquitous in OCPs formulated for epidemiological models.
\begin{proposition}\label{prop:affine}
Let \eqref{eq:Dynamics} be affine in controls and consider a cost functional \eqref{eq:PerformanceInd} with $\ell(t,\mathrm{X},u) = \ell_0(t,\mathrm{X} )+u^{\top}Q(t)u$, subject to Assumption~\ref{assum:fandell} on $\ell_0,f_0,G,Q$ and Assumption~\ref{Assump:Huu}. Under the conditions of Proposition~\ref{Prop:AsympFirstOrd}, there exists a subsequence $\{u^{i_k} \}_{k=1}^{\infty}$ that converges weakly to some $\bar{u}\in \mathcal{U}$, satisfying the first-order optimality condition.
\end{proposition}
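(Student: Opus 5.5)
Since $\mathcal{U}$ is bounded in $L^2([t_0,T])$ (by compactness of $U$), we first extract a subsequence $u^{i_k}\rightharpoonup \bar u$ weakly in $L^2$. As in Corollary~\ref{Cor:PointwiseGrid}, convexity and closedness of $U$ together with the Banach--Saks (or Mazur) theorem give $\bar u\in\mathcal{U}$. Let $\bar x,\bar\lambda$ be the state and co-state of $\bar u$ given by Propositions~\ref{Prop:WPx} and~\ref{Prop:LambdBound}. Because we only have weak convergence of the controls, the estimates \eqref{eq:Deltax5} and \eqref{eq:LambFinEst} are not directly available. Affinity in $u$ is exactly what lets us replace them.

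\textbf{Step 1: strong convergence of states.} By Proposition~\ref{Prop:WPx}, the $x^{i_k}$ are uniformly bounded, and $\|\dot x^{i_k}(t)\|\le C\mu(t)$ with $\mu\in L^2\subset L^1$. Hence they are equi-absolutely-continuous, and Arzel\`a--Ascoli gives a further subsequence $x^{i_k}\to \tilde x$ uniformly. We then pass to the limit in the integral form
\begin{equation*}
x^{i_k}(t)=\phi(0)+\int_{t_0}^t \big(f_0(s,\mathrm{X}^{i_k}_h(s))+G(s,\mathrm{X}^{i_k}_h(s))u^{i_k}(s)\big)\mathrm{d}s .
\end{equation*}
The $f_0$ term converges by dominated convergence. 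For the $G$ term we split it as $(G(\cdot,\mathrm{X}^{i_k}_h)-G(\cdot,\tilde{\mathrm{X}}_h))u^{i_k}+G(\cdot,\tilde{\mathrm{X}}_h)u^{i_k}$. The first part tends to zero by the Lipschitz bound $\mu(\cdot)\|\mathrm{X}^{i_k}_h-\tilde{\mathrm{X}}_h\|_\infty$ and boundedness of $U$. The second part converges by weak convergence tested against $G(\cdot,\tilde{\mathrm{X}}_h)\mathds{1}_{[t_0,t]}\in L^2$, which uses $\mu\in L^2$. So $\tilde x$ solves \eqref{eq:Dynamics} with $\bar u$, and uniqueness gives $\tilde x=\bar x$.

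\textbf{Step 2: strong convergence of co-states.} With $u$ entering affinely, $H_{x_j}=\ell_{0,x_j}+\lambda^\top(f_{0,x_j}+G_{x_j}u)$ (note $Q$ depends only on $t$). The co-state equation \eqref{eq:Costate} is therefore linear in $\lambda$ and affine in $u$. We run the same compactness argument backwards on the step-method intervals of Proposition~\ref{Prop:LambdBound}: uniform bound, equicontinuity, a uniformly convergent subsequence, and identification of the limit with $\bar\lambda$ via the weak/strong splitting above. This also gives $\lambda^{i_k-1}\to\bar\lambda$ uniformly, since $\|\lambda^{i}-\lambda^{i-1}\|_\infty\le M_{11}\|\Delta^i_u\|_{L^2}\to0$ by \eqref{eq:LambFinEst} and Corollary~\ref{Cor:Termin}. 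I expect Step 2 to be the main technical obstacle, because the backward bookkeeping of the shifted terms $H_{x_j}\vert_{t+h_j}$ and of the products $\lambda\, G_{x_j}u$ must be handled carefully.

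\textbf{Step 3: optimality of $\bar u$.} We have $H_u=2Q(t)u+G(t,\mathrm{X})^\top\lambda$, and the characterisation \eqref{eq:uioptae} is equivalent to the variational inequality
\begin{equation*}
\int_{t_0}^T \big(H_u(\cdot,\mathrm{X}^{i}_h,u^{i},\lambda^{i-1})+2C^{i}\Delta^{i}_u\big)^\top (v-u^{i})\,\mathrm{d}t\ge 0 \quad\text{for all } v\in\mathcal{U}.
\end{equation*}
The term $2C^i\Delta^i_u$ vanishes in $L^2$ because $C^i\preceq\gamma I$. The terms $G^\top\lambda$ converge strongly by Steps 1--2. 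What remains is the quadratic term $\int 2(u^{i})^\top Q(v-u^{i})$, which is not weakly continuous. Here we use Assumption~\ref{Assump:Huu}: $H_{uu}=2Q\succeq R$, so $u\mapsto\int u^\top Q u$ is convex and strongly continuous, hence weakly lower semicontinuous. This yields $\limsup_k\int 2(u^{i_k})^\top Q(v-u^{i_k})\le \int 2\bar u^\top Q(v-\bar u)$, using $\liminf\int (u^{i_k})^\top Q u^{i_k}\ge\int\bar u^\top Q\bar u$ and the linear part converging weakly. In the limit this gives
\begin{equation*}
\int_{t_0}^T H_u(\cdot,\bar{\mathrm{X}}_h,\bar u,\bar\lambda)^\top(v-\bar u)\,\mathrm{d}t\ge0 \quad\text{for all } v\in\mathcal{U}.
\end{equation*}
Choosing $v=\bar u$ off a measurable set $A$ and $v=w\in U$ on $A$ localises this to a pointwise a.e.\ inequality. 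By the projection characterisation used in Proposition~\ref{Prop:AsympFirstOrd}, that inequality is $\bar u=P_U(\bar u-H_u(\cdot,\bar{\mathrm{X}}_h,\bar u,\bar\lambda))$ a.e.
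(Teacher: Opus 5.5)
Your proposal is correct and follows essentially the same route as the paper. Both arguments extract $u^{i_k}\rightharpoonup\bar u\in\mathcal{U}$, use Arzel\`a--Ascoli on $\{x^{i_k}\}$ and $\{\lambda^{i_k}\}$ with the limits identified through the weak/strong splitting of the $G(\cdot,\mathrm{X}_h)u$ terms made possible by affinity in $u$, and then pass to the limit in the variational form of \eqref{eq:uioptae} (using \eqref{eq:LambFinEst} for $\lambda^{i_k-1}$, Corollary~\ref{Cor:Termin} for the $C^{i}\Delta_u^{i}$ term, and weak lower semicontinuity of the convex quadratic $Q$-term) before localising; your explicit appeal to uniqueness and your factor $2Qu$ in $H_u$ are slightly more careful than the paper's write-up.
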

\begin{proof}
As in the proof of Corollary~\ref{Cor:PointwiseGrid}, let $\{u^{i_k} \}_{k=1}^{\infty}$ satisfy $u^{i_k}\rightharpoonup \bar{u}\in \mathcal{U}$. 
Proposition~\ref{Prop:WPx} shows that $\{x^{i_k}\}_{k=1}^{\infty}$ is uniformly bounded on $[t_0,T]$. Moreover, Assumption~\ref{assum:fandell}  shows that for each $k\in \mathbb{N}$ and $t_0\leq t_1<t_2\leq T$
\begin{equation*}
\begin{array}{lll}
\|x^{i_k}(t_2)-x^{i_k}(t_1) \| &\leq \int_{t_1}^{t_2}\|f(\cdot,\mathrm{X}^{i_k}_h,u^{i_k} ) \|\vert_s \mathrm{d}s\\
&\leq (1+M_1) \int_{t_1}^{t_2}\mu(s)\mathrm{d}s,
\end{array}
\end{equation*}
 whence $\{x^{i_k}\}_{k=1}^{\infty}$ is uniformly equicontinous. Similar arguments hold for $\{\lambda^{i_k}\}_{k=1}^{\infty}$, in view of Proposition~\ref{Prop:LambdBound}. By the Arzelà-Ascoli theorem \cite{megginson2012introduction}, we can extract subsequences of $\{x^{i_k}\}_{k=1}^{\infty}$ and $\{\lambda^{i_k}\}_{k=1}^{\infty}$, which we denote with the same indexing, that converge uniformly to $\bar{x}$ and $\bar{\lambda}$, respectively. Since \eqref{eq:Dynamics} is affine in controls, we have for $t\in [t_0,T]$
 \begin{equation}\label{eq:Differentbarx}
\begin{array}{llll}
&\hspace{-5mm}x^{i_k}(t)-\phi(0) = \int_{t_0}^t(f_0(\cdot,\mathrm{X}_h^{i_k})+G(\cdot,\bar{\mathrm{X}}_h)u^{i_k})\vert_s\mathrm{d}s\\
&\hspace{10mm}+\int_{t_0}^t [(G(\cdot,\mathrm{X}_h^{i_k})-G(\cdot,\bar{\mathrm{X}}_h ))u^{i_k}]\vert_s\mathrm{d}s.
\end{array}
 \end{equation}
The last integral in \eqref{eq:Differentbarx} converges to zero, in view of compactness of $U$ and Assumption~\ref{assum:fandell}. Employing further $u^{i_k}\rightharpoonup \bar{u}$, we have $\bar{x}(t)-\phi(0) =  \int_{t_0}^tf_0(s,\bar{\mathrm{X}}_h(s))+G(s,\bar{\mathrm{X}}_h(s))\bar{u}(s)\mathrm{d}s$, whence $\bar{x}$ is absolutely continuous and solves \eqref{eq:Dynamics} with controller $\bar{u}$. Similar arguments show that $\bar{\lambda}$ is absolutely continuous and solves \eqref{eq:Costate} for $\bar{u}$ and $\bar{x}$.

% We now show that $\bar{u}$ satisfies the first-order optimality condition: for all $v\in U$, $H_u(\cdot,\bar{\mathrm{X}}_h,\bar{u},\bar{\lambda} )\vert_t(v-\bar{u}(t) )\geq 0$. 
Let $A\subseteq[t_0,T]$ with $\chi(A)>0$ and $v\in U$. By \eqref{eq:uioptae}
\begin{equation}\label{eq:Optuik}
\begin{array}{ll}
&2\int_A ( (u^{i_k}-u^{i_k-1})^{\top}C^i(v-u^{i_k}) )\vert_s\mathrm{d}s\\
&\geq \int_A H_u(\cdot,\mathrm{X}_h^{i_k},u^{i_k},\lambda^{i_k-1})\vert_s (u^{i_k}(s)-v) \mathrm{d}s.
\end{array}
\end{equation}
Since $H_u(t,\mathrm{X},u,\lambda)=u^{\top}Q(t)+\lambda^{\top}G(t,x)$,
\begin{equation}\label{eq:Optuik1}
\begin{array}{lll}
& \hspace{-8mm} \int_A H_u(\cdot,\mathrm{X}_h^{i_k},u^{i_k},\lambda^{i_k-1})\vert_sv \mathrm{d}s\\
& \hspace{-8mm} =\int_Av^{\top} (Q^{\top}u^{i_k})\vert_s\mathrm{d}s+\int_Av^{\top} (G^{\top}(\cdot,\mathrm{X}_h^{i_k})\lambda^{i_k-1})\vert_s\mathrm{d}s.
\end{array}
\end{equation}
The first integral on the right-hand side of \eqref{eq:Optuik1} converges to $\int_Av^{\top}(Q^{\top}\bar{u})\vert_s\mathrm{d}s$ since $u^{i_k}\rightharpoonup \bar{u}$. The second integral converges to $\int_Av^{\top}(G^{\top}(\cdot,\bar{\mathrm{X}}_h)\bar{\lambda})\vert_s\mathrm{d}s$, as can be seen by adding and subtracting $(G^{\top}(\cdot,\mathrm{X}_h^{i_k})\lambda^{i_k})\vert_s$ in the integrand and using Assumption~\ref{assum:fandell} for $G$, \eqref{eq:LambFinEst} and the uniform convergence of $\{x^{i_k}\}_{k=1}^{\infty}$ and $\{\lambda^{i_k}\}_{k=1}^{\infty}$. Similar arguments apply to $\int_A H_u(\cdot,\mathrm{X}_h^{i_k},u^{i_k},\lambda^{i_k-1})\vert_su^{i_k}(s) \mathrm{d}s$ with the difference that 
\begin{equation*}
\liminf_{k\to\infty}\int_A ( (u^{i_k})^{\top}Nu^{i_k})\vert_s\mathrm{d}s\geq \int_A (\bar{u}^{\top}Q\bar{u})\vert_s\mathrm{d}s,
\end{equation*}
which follows in view of weak lower semicontinuity, Assumption~\ref{Assump:Huu} and $H_{uu}(t,\mathrm{X},u,\lambda)=Q(t)$. Hence, taking the limit inferior $k\to \infty$ in \eqref{eq:Optuik}, since the left-hand side tends to zero by Corollary~\ref{Cor:Termin}, we have
$0\leq \int_A H_u(\cdot,\bar{\mathrm{X}}_h,\bar{u},\bar{\lambda})\vert_s (v-\bar{u}(s) ) \mathrm{d}s$ for any $A\subseteq[t_0,T]$ with $\chi(A)>0$. Therefore, we have $H_u(\cdot,\bar{\mathrm{X}}_h,\bar{u},\bar{\lambda} )\vert_t(v-\bar{u}(t) )\geq 0$ a.e.
\end{proof}

\begin{figure*}
\centering
\includegraphics[scale=0.22]{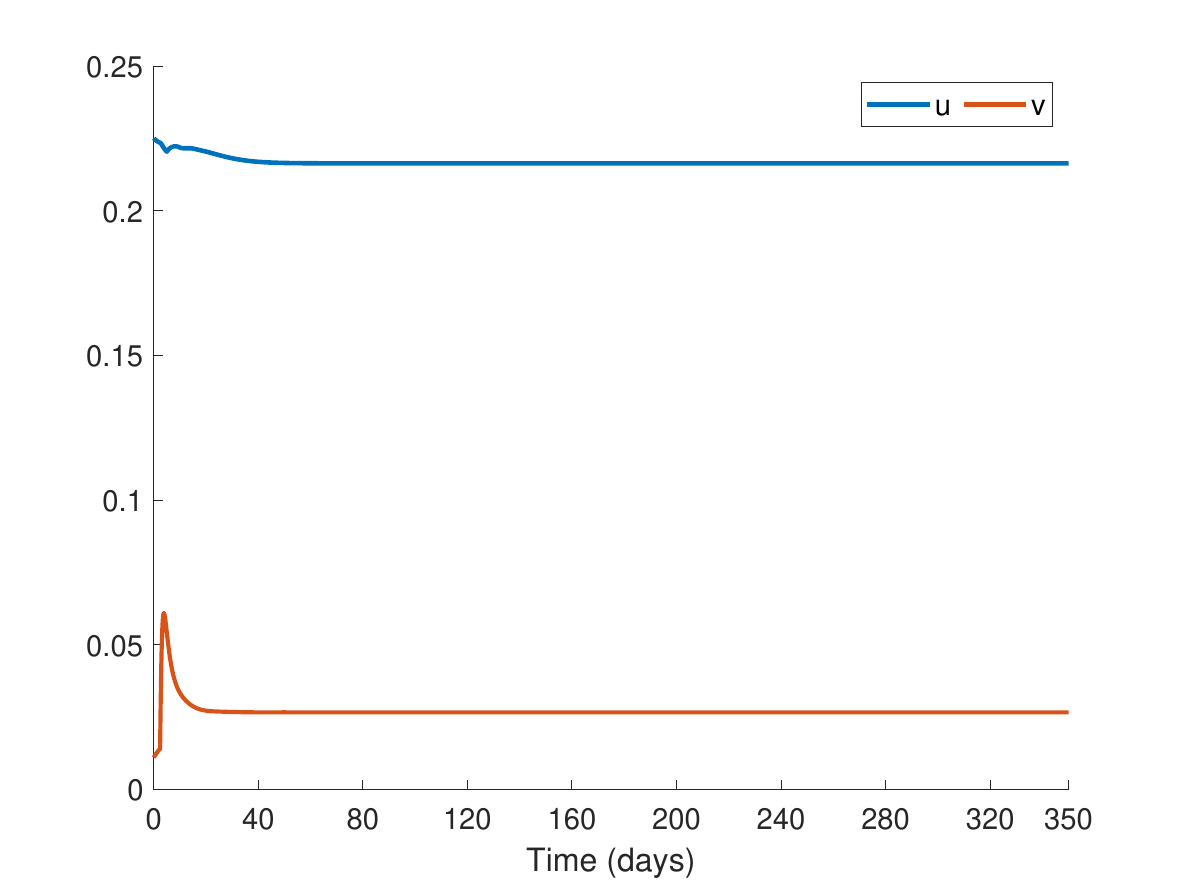}
\includegraphics[scale=0.22]{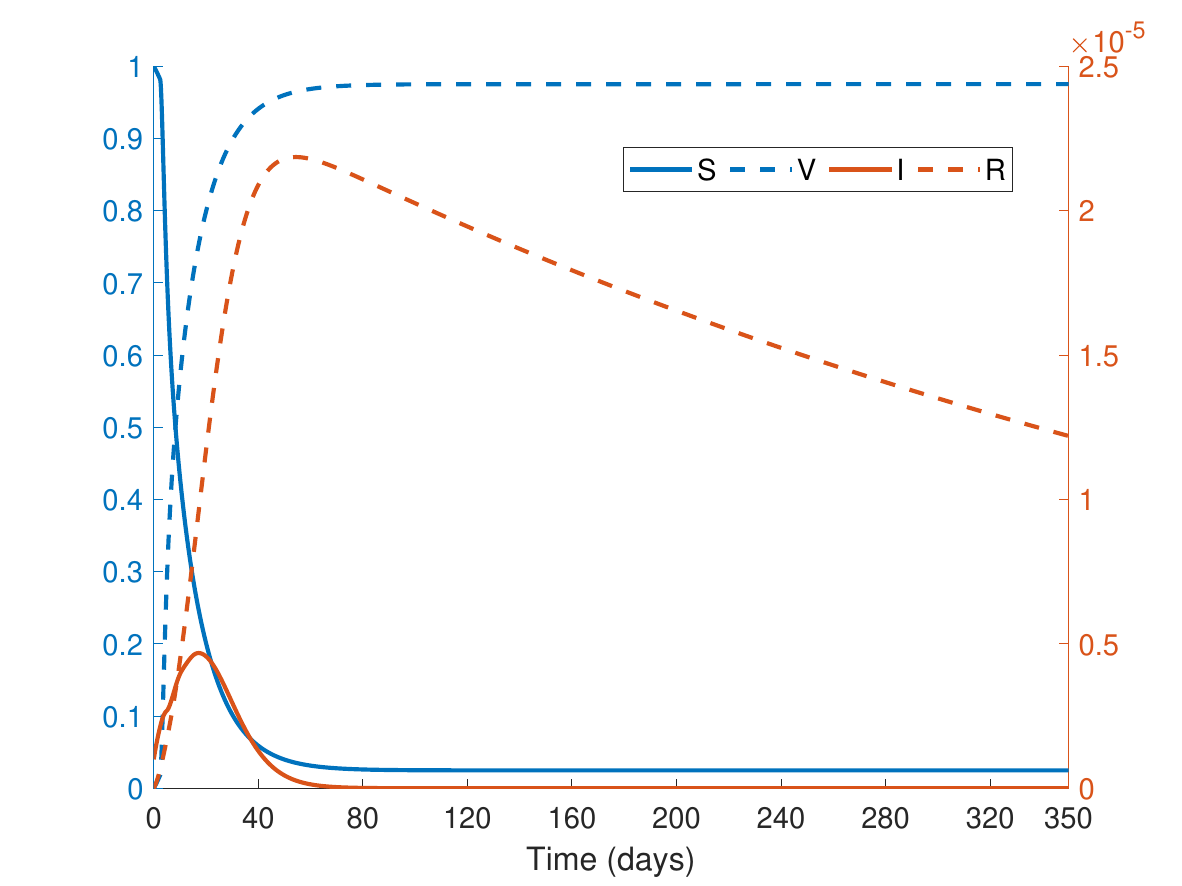} 
\includegraphics[scale=0.22]{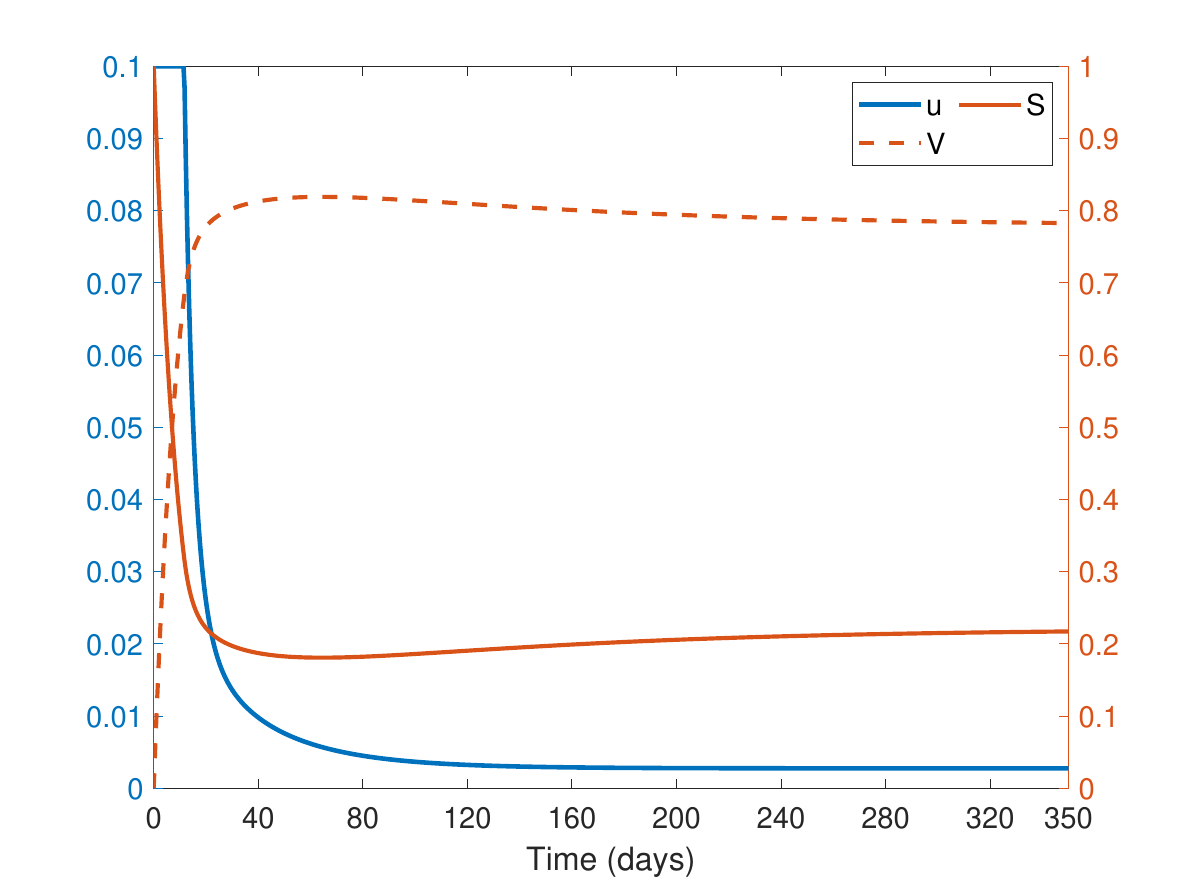}
\includegraphics[scale=0.22]{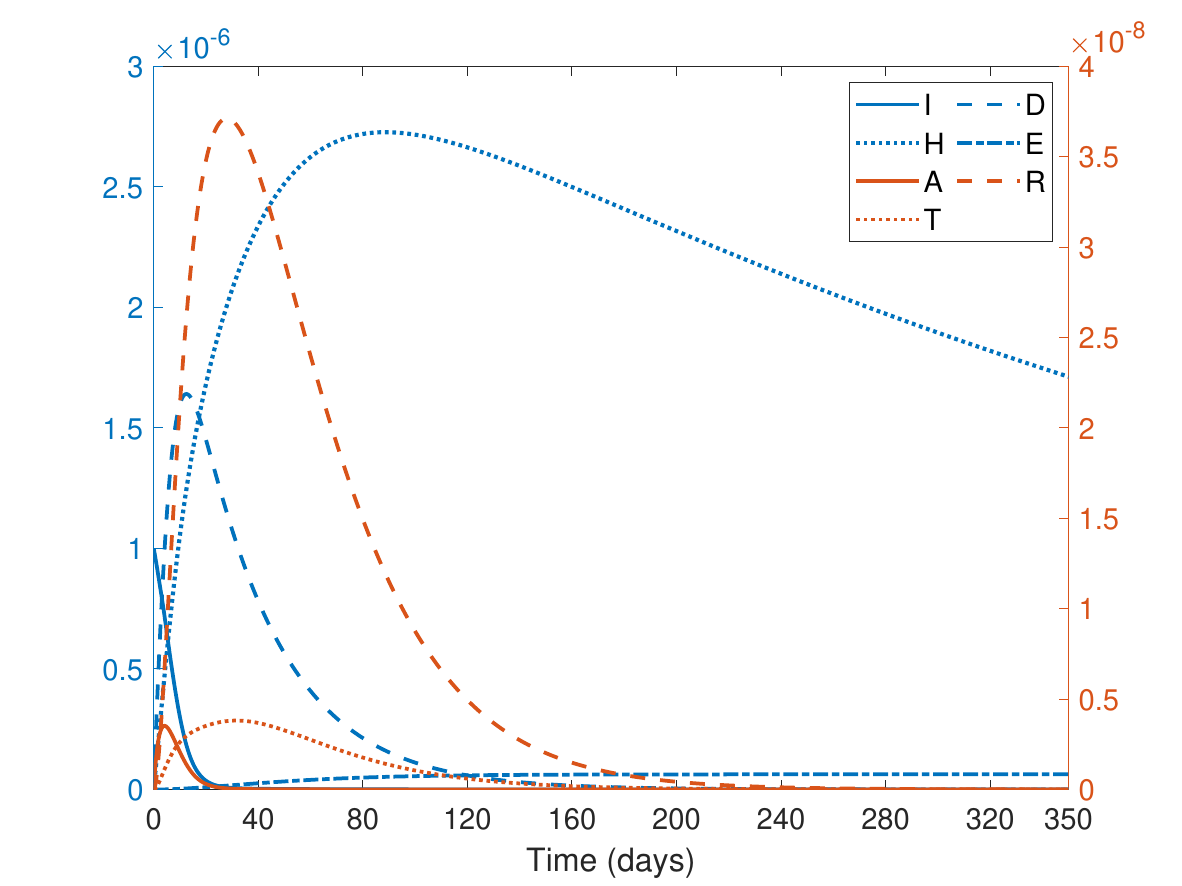}
\caption{Leftmost two panels: Extended SIRV OCP with delays $h_1 = 5$ and $h_2=7$, control bounds $u_{\max}= 0.4$ and $v_{\max}=0.8$, parameters $T =350$ days, $\Lambda= 2.91\cdot 10^{-5}$, $\mu_i = 2.90\cdot10^{-5}$ $\forall i$, $\beta=1$ and $\gamma = 1/6$, $\sigma_V=\sigma_R=1/500$, $\theta_V=0.0013$, $\theta_R=0.0021$, initial conditions $I_0=10^{-6}$, $S_0=1-I_0$, $R_0=0$, $V_0=0$, and cost functional weights $w_I = 10^4$, $w_u = 1$, $w_v=10$. Rightmost two panels: Extended SIDARTHE-V OCP with delays $h_1 = 3$ and $h_2=5$ days, control bound $u_{\max}=0.1$, other parameters and initial conditions as in \cite{CalaCampana2024}, and cost functional weights $w_{y_i} = 10^5$, $y=[I~D~A~R~T]$, and $w_u=1$. Solutions obtained with the ESSA algorithm.}
\label{Fig:sim}
\vspace{-5mm}
\end{figure*}

\section{Optimal Control of Delayed Epidemics}\label{sec:Num_exp}
The ESSA algorithm can efficiently solve OCPs for epidemiological models, where the assumptions of Proposition~\ref{prop:affine} are ubiquitous and hence convergence of a subsequence to a control that satisfies first-order optimality is always guaranteed.
We consider a SIRV model that partitions the population into susceptible, infected, recovered and vaccinated:

\begin{footnotesize}
\begin{equation*}
\begin{cases}
\dot{S} = \Lambda - \beta [1-u] S I_{h_1} + \sigma_R R + \sigma_V V - v S_{h_2} - \mu_S S\\
\dot{I} = \beta [1-u] S I_{h_1} + \theta_V \beta [1-u] V I_{h_1} + \theta_R \beta [1-u] R I_{h_1} - \gamma I - \mu_I I \\
\dot{R} = \gamma I - \sigma_R R - \theta_R \beta [1-u] R I_{h_1} - \mu_R R\\
\dot{V} = v S_{h_2} - \sigma_V V - \theta_V \beta [1-u] V I_{h_1} - \mu_V V
\end{cases}
\end{equation*}
\end{footnotesize}
with birth rate $\Lambda \geq 0$, death rate $\mu_i \geq 0$ for compartment $i$, transmission rate $\beta>0$, recovery rate $\gamma > 0$, waning immunity rates $\sigma_R>0$ and $\sigma_V>0$, infection probabilities $\theta_R \in [0,1]$ and $\theta_V \in [0,1]$ due to imperfect immunity.
The delays capture incubation ($h_1 > 0$) and build-up of vaccine-induced protection ($h_2 > 0$).
We control the stringency $u \in [0, u_{\max}]$ of non-pharmaceutical interventions, such as distancing or mask mandates, and the vaccination rate $v \in [0, v_{\max}]$.
The two left panels in Figure~\ref{Fig:sim} show optimal control and state trajectories for the associated OCP with cost $\frac{1}{2} I(T)^2 + \int_{t_0}^{T} ( w_I I(t) + w_u u(t)^2 + w_v v(t)^2 ) dt$ (see Remark~\ref{rem:terminal}) with $w_I = 10^4$, $w_u = 1$, $w_v=10$ and a suitable parameter choice \cite{CalaCampana2024}.
An optimal combination of non-pharmaceutical interventions and vaccination is selected, where the former are preferred due to the smaller weight in the cost functional, that enables rapid suppression of the outbreak, within $80$ days. Different choices of the cost functional weights lead to different control combinations \cite{Katz2026_extended}.

Introducing an incubation delay $h_1$ and a vaccine build-up delay $h_2$ in the extended SIDARTHE-V model \cite[Example~1]{CalaCampana2024} that includes susceptibles ($S$), different infected classes ($I$, $D$, $A$, $R$, $T$), recovered ($H$), dead ($E$) and vaccinated ($V$), amounts to replacing contagion products $S \mathcal{X}$, $V \mathcal{X}$ and $H \mathcal{X}$ with $S \mathcal{X}_{h_1}$, $V \mathcal{X}_{h_1}$ and $H \mathcal{X}_{h_1}$ respectively, where $\mathcal{X} \in \{I,D,A,R\}$, and the vaccination term $uS$ with $uS_{h_2}$, where the vaccination rate $u \in [0, u_{\max}]$ is the control variable. The cost is $\sum_{i=1}^5 \frac{y_i(T_H)^2}{2} + \int_{t_0}^{T_H} \left( \sum_{i=1}^5 w_{y_i} y_i(t) + w_u u(t)^2 \right) dt$ over the horizon $T_H$, with $y = [I~D~A~R~T]$.
The two right panels in Figure~\ref{Fig:sim} show optimal control and state trajectories for the associated OCP with a suitable parameter choice \cite{CalaCampana2024}. The strong penalisation of infections leads to an aggressive activation of the control, which initially saturates and then remains active, albeit at a low value, thus ensuring that almost $80\%$ of the population has vaccine coverage in spite of waning immunity and that the disease is suppressed within the first $240$ days. A milder control is obtained for smaller weights $w_i$ \cite{Katz2026_extended}.

Thorough numerical simulations illustrating the effect of different time delay values and different cost functional weights on the OCP solution for several epidemiological models are reported as supplementary material online \cite{Katz2026_extended}.

\section{Conclusions}

Our ESSA algorithm extends the S\&S algorithm \cite{Sakawa1980} to tackle OCPs for systems with an arbitrary number of discrete state delays. We have proven theoretical guarantees for ESSA: termination in a finite number of steps, asymptotic first-order optimality and convergence of a subsequence to a control satisfying first-order optimality. We have showcased the application of ESSA to optimally design interventions and vaccination for epidemiological systems affected by time delays capturing incubation and build-up of vaccine-induced protection. Future works includes extensions to the case of distributed delays.

\bibliographystyle{IEEEtran}

\bibliography{diffQMgame}

\end{document}